\def\input@path{{\string"/Users/paranoia/Documents/Research/mypapers/EKR theorems for simplicial complexes/\string"/}}
\numberwithin{equation}{section}
\numberwithin{figure}{section}
\theoremstyle{plain}
\newtheorem{thm}{Theorem}[section]
  \theoremstyle{remark}
  \newtheorem{note}[thm]{Note}
  \theoremstyle{definition}
  \newtheorem{defn}[thm]{Definition}
  \theoremstyle{plain}
  \newtheorem{conjecture}[thm]{Conjecture}
  \theoremstyle{remark}
  \newtheorem{rem}[thm]{Remark}
  \theoremstyle{plain}
  \newtheorem{lem}[thm]{Lemma}
 \theoremstyle{definition}
  \newtheorem{example}[thm]{Example}
  \theoremstyle{plain}
  \newtheorem{cor}[thm]{Corollary}
  \theoremstyle{plain}
  \newtheorem{prop}[thm]{Proposition}
\begin{document}
%
{}

\global\long\def\normalin{\mathrel{\lhd}}

\global\long\def\innormal{\mathrel{\rhd}}

\global\long\def\semidirect{\mathbin{\rtimes}}

\global\long\def\Stab{\operatorname{Stab}}

%
{}

\global\long\def\bdry{\partial}

\global\long\def\susp{\operatorname{susp}}

%
{}

\global\long\def\lrprod{\mathop{\check{\prod}}}

\global\long\def\lrtimes{\mathbin{\check{\times}}}

\global\long\def\urtimes{\mathbin{\hat{\times}}}

\global\long\def\urprod{\mathop{\hat{\prod}}}

\global\long\def\subsetdot{\mathrel{\subset\!\!\!\!{\cdot}\,}}

\global\long\def\dotsupset{\mathrel{\supset\!\!\!\!\!\cdot\,\,}}

\global\long\def\precdot{\mathrel{\prec\!\!\!\cdot\,}}

\global\long\def\dotsucc{\mathrel{\cdot\!\!\!\succ}}

\global\long\def\des{\operatorname{des}}

\global\long\def\rank{\operatorname{rank}}

\global\long\def\height{\operatorname{height}}

%
{}

\global\long\def\modreln{\mathrel{M}}

%
{}

\global\long\def\link{\operatorname{link}}

\global\long\def\freejoin{\mathbin{\circledast}}

\global\long\def\stellarsd{\operatorname{stellar}}

\global\long\def\conv{\operatorname{conv}}

\global\long\def\disjointunion{\mathbin{\dot{\cup}}}

\global\long\def\skel{\operatorname{skel}}

\global\long\def\depth{\operatorname{depth}}

\global\long\def\st{\operatorname{star}}

\global\long\def\alexdual#1{#1^{\vee}}

\global\long\def\reg{\operatorname{reg}}

\global\long\def\shift{\operatorname{Shift}}

%
{}

\global\long\def\Dom{\operatorname{Dom}}

%
{}

\global\long\def\cosetposet{\overline{\mathfrak{C}}}

\global\long\def\cosetlat{\mathfrak{C}}

\title{Erd\H{o}s-Ko-Rado theorems for simplicial complexes}

\author{Russ Woodroofe}

\email{russw@math.wustl.edu}

\subjclass[2000]{05E45, 05D05}

\address{Department of Mathematics, Washington University in St.~Louis, St.~Louis,
MO, 63130}
\begin{abstract}
A recent framework for generalizing the Erd\H{o}s-Ko-Rado Theorem,
due to Holroyd, Spencer, and Talbot, defines the Erd\H{o}s-Ko-Rado
property for a graph in terms of the graph's independent sets. Since
the family of all independent sets of a graph forms a simplicial complex,
it is natural to further generalize the Erd\H{o}s-Ko-Rado property
to an arbitrary simplicial complex. An advantage of working in simplicial
complexes is the availability of algebraic shifting, a powerful shifting
(compression) technique, which we use to verify a conjecture of Holroyd
and Talbot in the case of sequentially Cohen-Macaulay near-cones.
\end{abstract}
\maketitle

\section{\label{sec:Introduction}Introduction}

A family $\mathcal{A}$ of sets is \emph{intersecting} if every pair
of sets in $\mathcal{A}$ has non-empty intersection, and is an \emph{$r$-family}
if every set in $\mathcal{A}$ has cardinality $r$. A well-known
theorem of Erd\H{o}s, Ko, and Rado bounds the cardinality of an intersecting
$r$-family:
\begin{thm}
\emph{\label{thm:ErdosKoRadoV1} (Erd\H{o}s-Ko-Rado \cite{Erdos/Ko/Rado:1961})}
Let $r\leq\frac{n}{2}$ and $\mathcal{A}$ be an intersecting $r$-family
of subsets of $[n]$. Then $\vert\mathcal{A}\vert\leq{n-1 \choose r-1}$.
\end{thm}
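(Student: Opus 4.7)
The plan is to give Katona's classical cyclic-permutation proof, which reduces the problem to a simple double count after one structural lemma.

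First I would establish the following combinatorial lemma: fix a cyclic ordering $\sigma$ of $[n]$, and call an $r$-subset a $\sigma$-\emph{arc} if its elements occupy $r$ consecutive positions in $\sigma$. Then for $n \geq 2r$, any intersecting family of $\sigma$-arcs has size at most $r$. To see this, fix one arc $A$ in the family, say $A = \{1,\ldots,r\}$ in cyclic position. Exactly $2r-1$ arcs meet $A$: those starting at an index in $\{n-r+2,\ldots,n\} \cup \{1,\ldots,r\}$. I would pair the arc starting at position $k$ with the arc starting at $k+(n-r)$, for $k \in \{2,\ldots,r\}$. The hypothesis $n \geq 2r$ forces each such pair to consist of two disjoint arcs, so any intersecting family can contain at most one arc from each pair. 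With $A$ itself included, this yields at most $1+(r-1)=r$ arcs.

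Second, I would double count the pairs $(\sigma, A)$, where $\sigma$ is a cyclic ordering of $[n]$ and $A \in \mathcal{A}$ is a $\sigma$-arc. There are $(n-1)!$ cyclic orderings, and each contributes at most $r$ such pairs by the lemma. Conversely, for each fixed $A \in \mathcal{A}$, an easy count (order $A$ internally in $r!$ ways, then cyclically arrange with the $n-r$ other elements in $(n-r)!$ ways) shows that $A$ is a $\sigma$-arc for exactly $r!(n-r)!$ cyclic orderings. Equating the two counts,
\[
|\mathcal{A}| \cdot r!(n-r)! \;\leq\; r \cdot (n-1)!,
\]
which rearranges to $|\mathcal{A}| \leq \binom{n-1}{r-1}$.

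The main obstacle is the combinatorial lemma; once it is in hand, the double count is essentially automatic. The hypothesis $r \leq n/2$ enters only in the pairing argument, where it prevents cyclic wraparound from creating unexpected overlaps between the paired arcs. An alternative would be to run the shifting (compression) proof of Erd\H{o}s--Ko--Rado, which is more in keeping with the combinatorial shifting themes developed later in the paper, but Katona's argument has the virtue of being self-contained and very short.
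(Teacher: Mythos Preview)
Your argument is correct: this is Katona's cyclic-permutation proof, and both the arc lemma (with the pairing of the arc starting at $k$ and the arc starting at $k+(n-r)$, disjoint precisely because $n\geq 2r$) and the double count go through exactly as you describe.

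However, the paper does not actually prove this statement at all. Theorem~\ref{thm:ErdosKoRadoV1} is quoted from \cite{Erdos/Ko/Rado:1961} as background, then restated in simplicial language as Theorem~\ref{thm:ErdosKoRadoV2} and later invoked as the base case in the inductive proof of Lemma~\ref{lem:HolyroydTalbotShifted}. So there is no ``paper's own proof'' to compare against. If one insists on a comparison in spirit: the paper's machinery is shifting (combinatorial and algebraic), and the original Erd\H{o}s--Ko--Rado argument is a shifting proof, which would align more closely with the techniques developed here; your Katona averaging proof is a genuinely different route that avoids shifting entirely and trades the compression step for a clean double count. Both are standard and complete; the Katona proof is shorter and self-contained, while a shifting proof would rhyme better with the rest of the paper.
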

Given a simplicial complex $\Delta$ (defined in Section \ref{sec:Shifting})
and a face $\sigma$ of $\Delta$, we define the \emph{link} of $\sigma$
in $\Delta$ to be \[
\link_{\Delta}\sigma=\{\tau\,:\,\tau\cup\sigma\mbox{ is a face of }\Delta,\,\tau\cap\sigma=\emptyset\}.\]
An \emph{$r$-face }of $\Delta$ is a face of cardinality $r$. We
further let $f_{r}(\Delta)$ be defined as the number of $r$-faces
in $\Delta$, and the tuple $(f_{0}(\Delta),f_{1}(\Delta),\dots,f_{d+1}(\Delta))$
(where $d$ is the dimension of $\Delta$) is called the \emph{$f$-vector}
of $\Delta$. 
\begin{note}
We follow Swartz \cite{Swartz:2006} in our definition of $r$-face
and $f_{r}$. Other sources define an $r$-face to be a face with
dimension $r$ (rather than cardinality $r$) which shifts the indices
of the $f$-vector by 1.
\end{note}
We restate Theorem \ref{thm:ErdosKoRadoV1} using this language:
\begin{thm}
\label{thm:ErdosKoRadoV2} Let $r\leq\frac{n}{2}$ and $\mathcal{A}$
be an intersecting $r$-family of faces of the simplex with $n$ vertices.
Then $\vert\mathcal{A}\vert\leq f_{r-1}(\link_{\Delta}v_{1})$.
\end{thm}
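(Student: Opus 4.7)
The plan is to observe that Theorem \ref{thm:ErdosKoRadoV2} is not a new result but merely a notational reformulation of Theorem \ref{thm:ErdosKoRadoV1}, so the proof amounts to checking that the two bounds coincide. I would therefore compute $f_{r-1}(\link_{\Delta}v_{1})$ explicitly when $\Delta$ is the simplex on the vertex set $[n]$.

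First I would recall that the simplex with $n$ vertices has every subset of $[n]$ as a face. Applying the definition of link given just above the theorem statement,
\[
\link_{\Delta}v_{1}=\{\tau\,:\,\tau\cup\{v_{1}\}\text{ is a face of }\Delta,\,v_{1}\notin\tau\},
\]
which collapses, in this maximal example, to the set of all subsets of $[n]\setminus\{v_{1}\}$. In other words, $\link_{\Delta}v_{1}$ is itself the simplex on the $(n-1)$-element vertex set $\{v_{2},\dots,v_{n}\}$.

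Next I would count the $(r-1)$-faces of this link: an $(r-1)$-face is by definition a face of cardinality $r-1$, and the $(r-1)$-subsets of an $(n-1)$-element set number $\binom{n-1}{r-1}$. Thus $f_{r-1}(\link_{\Delta}v_{1})=\binom{n-1}{r-1}$, and the inequality in Theorem \ref{thm:ErdosKoRadoV2} reduces to the inequality in Theorem \ref{thm:ErdosKoRadoV1}, which we are entitled to invoke.

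There is no genuine obstacle here; the only thing to be careful about is the indexing convention recorded in the preceding note, namely that $f_{r-1}$ counts faces of cardinality $r-1$ rather than dimension $r-1$. Once that convention is in hand, the identification $f_{r-1}(\link_{\Delta}v_{1})=\binom{n-1}{r-1}$ is immediate, and the proof is complete.
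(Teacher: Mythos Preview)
Your proposal is correct and matches the paper's treatment: the paper presents Theorem~\ref{thm:ErdosKoRadoV2} explicitly as a restatement of Theorem~\ref{thm:ErdosKoRadoV1} in simplicial-complex language, without a separate proof. Your computation that $\link_{\Delta}v_{1}$ is the simplex on $n-1$ vertices and hence $f_{r-1}(\link_{\Delta}v_{1})=\binom{n-1}{r-1}$ is exactly the identification needed.
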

Let $G$ be a graph with edge set $E(G)$ and vertex set $V(G)$.
The \emph{independence complex} of $G$, denoted $I(G)$, is the simplicial
complex consisting of all independent sets of $G$. For a vertex $v\in V(G)$,
the \emph{closed neighborhood} $N[v]$ consists of $v$ and all its
neighbors. We notice that $\link_{I(G)}v=I(G\setminus N[v])$.

Following Holroyd, Spencer, and Talbot \cite[Section 1]{Holroyd/Spencer/Talbot:2005},
we define:
\begin{defn}
\label{def:r-EKR} A simplicial complex $\Delta$ is \emph{$r$-EKR
}if every intersecting $r$-family $\mathcal{A}$ of faces of $\Delta$
satisfies $\vert\mathcal{A}\vert\leq\max_{v\in V(\Delta)}f_{r-1}(\link_{\Delta}v)$.
Equivalently, $\Delta$ is $r$-EKR if the set of all $r$-faces containing
some $v$ has maximal cardinality among all intersecting families
of $r$-faces.
\end{defn}
Holroyd and Talbot \cite{Holroyd/Talbot:2005} further investigated
the problem of when graphs are $r$-EKR, and made the following conjecture:
\begin{conjecture}
\emph{\label{con:HolroydTalbot} (Holroyd-Talbot }\cite[Conjecture 7]{Holroyd/Talbot:2005}\emph{)
}If $G$ is a graph where the minimal facet cardinality of $I(G)$
is $k$, then $I(G)$ is $r$-EKR for $r\leq\frac{k}{2}$. 
\end{conjecture}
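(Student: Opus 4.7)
The strategy, motivated by the abstract's reference to algebraic shifting, is to reduce the problem to a combinatorially transparent ``shifted'' form of $I(G)$, verify the $r$-EKR bound there, and transfer the conclusion back. The plan breaks into three stages: first, show that the right-hand side $\max_{v} f_{r-1}(\link_{I(G)} v)$ is preserved (or controlled) under a suitable shifting operation; second, show that the maximum size of an intersecting $r$-family does not increase when we replace $I(G)$ by its shift; third, verify the EKR bound directly for shifted complexes, where the structure of $r$-faces is much more rigid.

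For the first stage, algebraic shifting $\shift(\Delta)$ preserves the full $f$-vector, so $f_{r-1}$ of any link can be tracked through the operation. In a shifted complex, the vertex $1$ has the largest link in a strong sense, so $f_{r-1}(\link_{\shift(\Delta)} v_1) = \max_v f_{r-1}(\link_{\shift(\Delta)} v)$. For the second stage, I would combine this with combinatorial shifting (which sends intersecting families to intersecting families of the same size) and appeal to the preservation of the sequentially Cohen-Macaulay property under algebraic shifting, as this is the natural setting where the $f$-vector data of $\shift(\Delta)$ gives quantitative control of $\Delta$.

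The third stage---verifying $r$-EKR for a shifted complex $\Delta$---can proceed by splitting an intersecting $r$-family $\mathcal{A}$ into $\mathcal{A}_1 = \{F \in \mathcal{A} : 1 \in F\}$ and $\mathcal{A}_0 = \{F \in \mathcal{A} : 1 \notin F\}$. The bound $|\mathcal{A}_1| \leq f_{r-1}(\link_{\Delta} v_1)$ is automatic, so the real work is in bounding $|\mathcal{A}_0|$ and trading this against the slack inside $\link_\Delta v_1$. The minimum facet cardinality hypothesis $k \geq 2r$ is exactly what supplies enough ``room'' in $\link_\Delta v_1$ to absorb $\mathcal{A}_0$, playing the role that $r \leq n/2$ plays in the classical Erd\H{o}s-Ko-Rado theorem; a Hilton-Milner--type inequality applied inside the shifted structure should close out the count, with induction on $|V(\Delta)|$ and on $r$ handling the recursive substructure $\link_\Delta v_1$, which is again shifted.

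The central obstacle---and the reason this remains a conjecture rather than a theorem---is the transition between $I(G)$ and its shifted model. Combinatorial shifting need not preserve membership in $I(G)$ unless $I(G)$ has some cone-like structure, while algebraic shifting does not interact transparently with intersecting families. The natural partial result, alluded to in the abstract, is to restrict to classes of complexes (such as sequentially Cohen-Macaulay near-cones) where $\Delta$ and $\shift(\Delta)$ are close enough that the bound on the shifted side pulls back; pushing past this case appears to require genuinely new input beyond what shifting alone provides.
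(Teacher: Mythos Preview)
This statement is a \emph{conjecture}, not a theorem: the paper does not prove it in full generality and explicitly leaves it open. What the paper does prove is the special case of sequentially Cohen-Macaulay near-cones (Corollary~\ref{cor:SeqCMisrEKR}), and your final paragraph correctly identifies that this is where the strategy lands. So the right thing to compare is your sketch of the partial result against the paper's actual proof of that partial result.

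Your overall architecture matches the paper's: prove the bound for shifted complexes, then use algebraic shifting to transfer. But two of your steps diverge from the paper in ways that matter.

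For the shifted case (your stage three), you split an intersecting $r$-family on whether $v_1$ lies in a face and propose to absorb the $v_1$-free part via ``slack'' and a Hilton--Milner-type inequality. The paper instead splits on the \emph{last} vertex $v_n$: faces not containing $v_n$ form an intersecting $r$-family in the shifted complex $\Delta\setminus v_n$, and faces containing $v_n$ yield (after deleting $v_n$, using shiftedness to argue the result is still intersecting) an intersecting $(r-1)$-family in $\link_\Delta v_n$. Both pieces are handled by induction, and the two bounds sum exactly to $f_{r-1}(\link_\Delta v_1)$. This $v_n$-split is what makes the induction close; your $v_1$-split does not give an obvious recursion, and the appeal to Hilton--Milner is not how the paper proceeds.

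For the transfer (your stages one and two), you say algebraic shifting ``does not interact transparently with intersecting families'' and reach for combinatorial shifting instead. In fact the paper uses Kalai's result that \emph{algebraic} shifting sends intersecting $r$-families to intersecting $r$-families (Lemma~\ref{lem:ShiftInterestingFamily}), together with $\shift\mathcal{A}\subseteq\shift\Delta$ (Lemma~\ref{lem:ShiftedContainment}); no combinatorial shifting is needed. The genuine obstruction is elsewhere: one must know that $\max_v f_{r-1}(\link_\Delta v)$ does not drop under shifting, and that the minimum facet cardinality of $\shift\Delta$ is still at least $2r$. The paper handles the first via Nevo's theorem that for a near-cone with apex $v$ one has $\link_{\shift\Delta}v_1=\shift(\link_\Delta v)$, whence $f_{r-1}(\link_\Delta v)=f_{r-1}(\link_{\shift\Delta}v_1)$; and the second via the identification of the minimum facet cardinality of $\shift_F\Delta$ with $\depth_F\Delta+1$, which is why ``sequentially Cohen-Macaulay'' is exactly the hypothesis under which the minimum facet cardinality survives shifting. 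You gesture at both of these but do not name either mechanism; they are the heart of why the argument works only for near-cones of full depth and not for arbitrary $I(G)$.
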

The natural extension was made by Borg \cite{Borg:2009}:
\begin{conjecture}
\label{con:HolyroydTalbotForComplexes}\emph{(Borg }\cite[Conjecture 1.7]{Borg:2009}\emph{)}
If $\Delta$ is a simplicial complex having minimal facet cardinality
$k$, then $\Delta$ is $r$-EKR for $r\leq\frac{k}{2}$.\end{conjecture}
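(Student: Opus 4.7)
The plan is to use \emph{algebraic shifting} as a combinatorial compression tool. Algebraic shifting assigns to a simplicial complex $\Delta$ on $[n]$ a shifted complex $\shift(\Delta)$ with the same $f$-vector and the following rigidity property: if $\sigma \in \shift(\Delta)$, $v \in \sigma$, and $w < v$ with $w \notin \sigma$, then $(\sigma \setminus v) \cup w \in \shift(\Delta)$. The hope is to reduce the problem to the much more tractable case where $\Delta$ is already shifted, and then to exploit the near-cone structure that shifted complexes automatically possess (every facet either contains vertex $1$, or is ``small'').

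The steps are as follows. First, prove a transfer lemma: every intersecting $r$-family $\mathcal{A}$ in $\Delta$ can be compressed into an intersecting $r$-family of at least the same size sitting inside $\shift(\Delta)$. Second, reduce to shifted complexes and show directly that a shifted complex $\Gamma$ is $r$-EKR when its minimum facet cardinality is at least $2r$. Here one should argue that if $\mathcal{A}$ is intersecting and contains some $r$-face $\sigma$ not containing vertex $1$, then the shifting relation lets us replace $\sigma$ by $(\sigma \setminus \max\sigma) \cup \{1\}$ while keeping $\mathcal{A}$ intersecting; any remaining ``bad'' faces can be paired against faces in the star of $1$ via a Hilton-Milner-style injection, using that $r \leq k/2$ guarantees enough room inside a facet to simultaneously contain two disjoint $r$-sets. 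Third, conclude that the maximum intersecting $r$-family of $\Gamma$ is $f_{r-1}(\link_{\Gamma} 1)$, which equals $\max_v f_{r-1}(\link_\Delta v)$ by the $f$-vector preservation of shifting.

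The hard part — and the reason the full conjecture seems out of reach by this method — is the behavior of the hypothesis ``minimal facet cardinality $\geq k$'' under shifting. Algebraic shifting preserves the $f$-vector but need not preserve facet cardinalities: a facet of cardinality $k$ in $\Delta$ can be replaced by a face of smaller size in $\shift(\Delta)$, destroying the hypothesis $r \leq k/2$ on $\shift(\Delta)$. To rescue this, one would invoke the theorems of Kalai and Bj\"orner-Wachs stating that symmetric algebraic shifting preserves the pure $i$-skeleta of a \emph{sequentially Cohen-Macaulay} complex; in that case minimal facet cardinality is preserved, so the reduction to the shifted, near-cone case is legitimate, yielding Conjecture~\ref{con:HolyroydTalbotForComplexes} for the sequentially Cohen-Macaulay near-cone class announced in the abstract. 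Removing the sequentially Cohen-Macaulay hypothesis would seem to require a compression technique that acts on facet cardinalities, not merely on the $f$-vector, and I would expect this to be the main obstacle to a proof of the full conjecture.
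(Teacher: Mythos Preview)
You have correctly identified that this statement is a \emph{conjecture} that the paper does not prove in full generality, and your diagnosis of the obstruction --- that algebraic shifting need not preserve minimal facet cardinality --- is precisely the one the paper points to. Your overall strategy (shift $\mathcal{A}$ and $\Delta$ together, prove the shifted case directly, then transfer back) is the paper's, and the partial result you land on, the sequentially Cohen-Macaulay near-cone case, is exactly the paper's Corollary~\ref{cor:SeqCMisrEKR}.

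Two points in your sketch deserve correction. First, in your third step you write that $f_{r-1}(\link_{\Gamma}1)=\max_{v}f_{r-1}(\link_{\Delta}v)$ ``by the $f$-vector preservation of shifting.'' This is not right: shifting preserves the \emph{global} $f$-vector, not the $f$-vectors of links. The equality you need, $f_{r-1}(\link_{\shift\Delta}v_{1})=f_{r-1}(\link_{\Delta}v)$ for the apex $v$, is a theorem of Nevo (Lemma~\ref{lem:ShiftedNearCone} and Corollary~\ref{cor:NearConePreserveFApex}) and requires $\Delta$ itself to be a near-cone --- this is the real reason the near-cone hypothesis is essential, not merely a byproduct of the shifted structure on $\Gamma$. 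Second, your argument for the shifted case (replace $\sigma$ by $(\sigma\setminus\max\sigma)\cup\{1\}$, then a Hilton--Milner-style injection) is too loose: such a replacement can collide with faces already in $\mathcal{A}$, and intersection with a $\tau$ meeting $\sigma$ only at $\max\sigma$ is not obviously preserved. The paper instead first shifts $\mathcal{A}$ itself, then inducts by splitting $\shift\mathcal{A}$ on whether the \emph{last} vertex $v_{n}$ is present, reducing to $\link_{\Delta}v_{n}$ and $\Delta\setminus v_{n}$ (Lemma~\ref{lem:HolyroydTalbotShifted}); the condition $r\le k/2$ is used to force the deleted family $\mathcal{C}_{0}$ to remain intersecting.
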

\begin{rem}
A different version of an EKR property for simplicial complexes was
studied by Chvátal \cite{Chvatal:1974}, who conjectured that if $\mathcal{A}$
is an intersecting family of faces (of possibly differing dimensions)
then \[
\vert\mathcal{A}\vert\leq\max_{v\in V(\Delta)}\left(\sum_{r}f_{r}(\link_{\Delta}v)\right),\]
i.e., that the set of all faces containing some $v$ has maximal cardinality
among all intersecting families of faces. We notice that Conjecture
\ref{con:HolyroydTalbotForComplexes} is an analogue of Chvátal's
Conjecture for uniform intersecting families. 
\end{rem}
We refer the reader to \cite{Borg/Holroyd:2009} for additional background
on the $r$-EKR property in graphs, and to \cite{Borg:2009} for further
relationships with more general intersection problems.

\medskip{}

This paper is organized as follows. In Section \ref{sec:Shifting}
we review the necessary background on shifted complexes, algebraic
shifting, the Cohen-Macaulay property, and near-cones. We also characterize
the graphs $G$ such that $\shift I(G)$ is the independence complex
of some other graph, and the graphs such that $I(G)$ is a near-cone.
In Section \ref{sec:Main-theorem} we present and prove our main theorem,
Theorem \ref{thm:HolyroydTalbotDepthK}. In Section \ref{sec:Applications}
we give applications of Theorem \ref{thm:HolyroydTalbotDepthK} to
Conjecture \ref{con:HolroydTalbot}. In particular, we recover the
main result of \cite{Hurlbert/Kamat:2009UNP}, and many of the results
of \cite{Holroyd/Spencer/Talbot:2005}. We close in Section \ref{sec:Further-questions}
with further questions regarding the strict $r$-EKR property.

\section*{Acknowledgements}

The article at Gil Kalai's blog (at http://gilkalai.wordpress.com/~)
on the use of algebraic shifting in intersection theorems was very
helpful, especially in proving Lemma \ref{lem:HolyroydTalbotShifted}.
Art Duval and Isabella Novik also answered some of my questions about
algebraic shifting. Glenn Hurlbert and Vikram Kamat explained the
difficulties in extending their proof of Corollary \ref{cor:HurlbertKamat-Chordal}
to all vertex decomposable graphs.

\section{\label{sec:Shifting}Shifting}

We will need some basic simplicial complex language: An \emph{(abstract)
simplicial complex} $\Delta$ is a system of sets (called \emph{faces})
on base set $V(\Delta)$ (called \emph{vertices}) such that if $\sigma$
is a face then every subset of $\sigma$ is also a face. We assume
that every vertex is contained in some face. A \emph{facet} of $\Delta$
is a face that is maximal under inclusion. It is well-known that any
abstract simplicial complex has a \emph{geometric realization}, a
geometric simplicial complex with the same face incidences; so we
can use terms from geometry such as dimension to describe a simplicial
complex.

If $\mathcal{F}$ is some family of sets, then the simplicial complex
$\Delta(\mathcal{F})$ \emph{generated by} $\mathcal{F}$ has faces
consisting of all subsets of all sets in $\mathcal{F}$. For a simplicial
complex $\Delta$, the \emph{$r$-skeleton} $\Delta^{(r)}$ consists
of all faces of $\Delta$ having dimension at most $r$, while the
\emph{pure $r$-skeleton} is the subcomplex generated by all faces
of $\Delta$ having dimension exactly $r$. The \emph{join} of disjoint
simplicial complexes $\Delta$ and $\Sigma$ is the simplicial complex
$\Delta*\Sigma$ with faces $\tau\cup\sigma$, where $\tau$ is a
face of $\Delta$ and $\sigma$ a face of $\Sigma$.

A simplicial complex $\Delta$ with ordered vertex set $\{v_{1},\dots,v_{n}\}$
is \emph{shifted} if whenever $\sigma$ is a face of $\Delta$ containing
vertex $v_{i}$, then $\left(\sigma\setminus\{v_{i}\}\right)\cup\{v_{j}\}$
is a face of $\Delta$ for every $j<i$. An $r$-family $\mathcal{F}$
of subsets of $\{v_{1},\dots,v_{n}\}$ is \emph{shifted} if it generates
a shifted complex.

A general approach to proving theorems similar to Theorem \ref{thm:ErdosKoRadoV2}
is to define a \emph{shifting operation} or \emph{compression operation}
which replaces a non-shifted set system with a shifted system obeying
some of the same combinatorial properties. Erd\H{o}s, Ko, and Rado
pioneered this technique in \cite{Erdos/Ko/Rado:1961}, and their
operation is now called combinatorial shifting. Combinatorial shifting
is discussed in the survey article \cite{Frankl:1987}, particularly
as applied to intersection theorems.

\subsection{Algebraic shifting}

The specific shifting operation we will use is called \emph{exterior
algebraic shifting} \emph{(with respect to a field $F$)}, and we
denote the (exterior) algebraic shift of $\Delta$ by $\shift\Delta$,
or $\shift_{F}\Delta$ if we want to emphasize the field. Algebraic
shifting was first studied by Kalai, and unless otherwise stated the
facts we present here were first proved by him. 

The precise definition of $\shift\Delta$ will not be important for
us, but can be found in Kalai's survey article \cite{Kalai:2002}.
Rather than working with the definition, we examine $\shift\Delta$
using a series of lemmas collected in \cite{Kalai:2002}. The following
basic properties we will use without further mention:
\begin{lem}
\label{lem:BasicFacts}\cite{Kalai:2002} Let $\Delta$ be a simplicial
complex with $n$ vertices. Then:
\begin{enumerate}
\item $\shift\Delta$ is a shifted simplicial complex on an ordered vertex
set $\{v_{1},\dots,v_{n}\}$.
\item If $\Delta$ is shifted, then $\shift\Delta\cong\Delta$.
\item $f_{i}(\shift\Delta)=f_{i}(\Delta)$ for all $i$.
\end{enumerate}
\end{lem}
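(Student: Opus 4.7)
The plan is to derive all three items from the linear-algebraic definition of exterior algebraic shifting, reducing the content of the lemma to one genuinely nontrivial input from Kalai's theory. Work in the exterior face ring $\Lambda^{*}[\Delta] = \Lambda^{*} F^{n} / J_{\Delta}$, where $J_{\Delta}$ is the ideal generated by the wedge monomials $e_{S}$ indexed by non-faces $S$. Its $k$-th graded piece $\Lambda^{k}[\Delta]$ has $F$-basis $\{\bar{e}_{\sigma} : \sigma \in \Delta,\ |\sigma| = k\}$ and hence dimension $f_{k}(\Delta)$ (in the paper's convention where $r$-faces have cardinality $r$). Fix a generic basis $f_{1}, \dots, f_{n}$ of $F^{n}$---for instance with entries algebraically independent over the prime field---and define $\shift \Delta$ to consist of those subsets $\sigma \subseteq \{v_{1}, \dots, v_{n}\}$ whose image $\bar{f}_{\sigma}$ in $\Lambda^{|\sigma|}[\Delta]$ is not contained in the span of $\{\bar{f}_{\tau} : |\tau| = |\sigma|,\ \tau <_{L} \sigma\}$, where $<_{L}$ is the lex order induced by $v_{1} < \cdots < v_{n}$.

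Item (3) is then immediate from Gaussian elimination against the $\bar{e}$-basis in each graded piece: by construction $\{\bar{f}_{\sigma} : \sigma \in \shift \Delta,\ |\sigma| = k\}$ is a basis of $\Lambda^{k}[\Delta]$, so $f_{k}(\shift \Delta) = \dim_{F} \Lambda^{k}[\Delta] = f_{k}(\Delta)$. Item (2) can be handled by an induction on the lex order: if $\Delta$ is already shifted, the triangular structure of the change-of-basis from $\bar{e}$ to $\bar{f}$, combined with shiftedness of $\Delta$, forces $\bar{f}_{\sigma}$ to have $\bar{e}_{\sigma}$ as its lex-leading term precisely when $\sigma \in \Delta$, and otherwise to lie in the span of strictly earlier $\bar{f}_{\tau}$'s; so $\shift \Delta = \Delta$ on the nose.

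The hard step---and the main obstacle---is item (1), the shiftedness of $\shift \Delta$ as a simplicial complex. Here I would invoke the Borel-invariance property central to Kalai's theory: the initial face set with respect to a sufficiently generic basis is preserved under the action of the upper-triangular subgroup $B \subseteq \operatorname{GL}_{n}(F)$ on $F^{n}$. Granted this, combinatorial shiftedness comes essentially for free, since replacing $v_{i}$ by $v_{j}$ with $j < i$ in a face $\sigma$ corresponds to applying an elementary upper-triangular matrix and then re-reading off the lex-leading support. The nontrivial content is proving the $B$-invariance of the initial set, which requires a careful analysis of how upper-triangular actions interact with the lex order on wedge products in the quotient $\Lambda^{*}[\Delta]$---this is the step I expect to require the most work, and is where the detailed definitions in Kalai's survey become unavoidable.
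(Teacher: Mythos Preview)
The paper does not prove this lemma at all: it is stated with a citation to Kalai's survey \cite{Kalai:2002} and then used as a black box, the author explicitly remarking that ``the precise definition of $\shift\Delta$ will not be important for us.'' So there is no proof in the paper to compare your proposal against; you are supplying an argument where the author deliberately chose to import one.

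That said, your outline is essentially the standard route taken in Kalai's survey, with two imprecisions worth flagging. First, your argument for item~(2) invokes a ``triangular structure of the change-of-basis from $\bar e$ to $\bar f$,'' but the generic basis $f_1,\dots,f_n$ is not upper-triangular in the $e_i$; every minor is nonzero, so every $\bar f_\sigma$ has the same lex-leading $\bar e$-term in $\Lambda^k[\Delta]$. The correct argument runs the other way: because $\Delta$ is shifted, the ideal $J_\Delta$ is Borel-fixed, and for Borel-fixed ideals one may \emph{replace} the generic change of basis by an upper-triangular one without altering the initial ideal; only then does the triangularity you want appear. This is really the same $B$-invariance you invoke for item~(1), so items~(1) and~(2) are not as independent as your write-up suggests. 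Second, you do not separately address why $\shift\Delta$ is closed under taking subsets (i.e.\ is a simplicial complex and not merely a shifted family in each cardinality); this also comes out of the Borel-fixedness of the generic initial ideal, but it deserves a sentence.
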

Shifting respects subcomplexes, at least in a weak sense:
\begin{lem}
\cite[Theorem 2.2]{Kalai:2002}\label{lem:ShiftedContainment} If
$\Sigma\subset\Delta$ are simplicial complexes, then $\shift\Sigma\subset\shift\Delta$.\end{lem}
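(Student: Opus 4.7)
The plan is to argue directly from the standard construction of exterior algebraic shifting via the exterior face ring, using monotonicity of the defining ideal. Fix an $n$-dimensional $F$-vector space $V$ with distinguished basis $e_{1},\dots,e_{n}$ indexed by the vertex set, and, for $\sigma=\{i_{1}<\cdots<i_{k}\}\subseteq[n]$, write $e_{\sigma}=e_{i_{1}}\wedge\cdots\wedge e_{i_{k}}$. For a simplicial complex $\Gamma$ on $[n]$, let $J_{\Gamma}\subseteq\bigwedge V$ denote the ideal generated by $\{e_{\sigma}:\sigma\notin\Gamma\}$, and set $E_{\Gamma}=\bigwedge V/J_{\Gamma}$ to be the exterior face ring. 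Let $f_{1},\dots,f_{n}$ be a second basis of $V$ obtained from the $e_{i}$ by a generic change of coordinates, and define $f_{\sigma}$ analogously. By the definition recorded in \cite{Kalai:2002}, $\sigma\in\shift\Gamma$ exactly when the class $f_{\sigma}+J_{\Gamma}$ in $E_{\Gamma}$ is linearly independent from $\{f_{\tau}+J_{\Gamma}:|\tau|=|\sigma|,\ \tau<_{L}\sigma\}$, where $<_{L}$ is the fixed (reverse) lexicographic order.

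The essential observation is monotonicity of the ideal: if $\Sigma\subseteq\Delta$, then every generator $e_{\sigma}$ of $J_{\Delta}$ also satisfies $\sigma\notin\Sigma$, and hence $J_{\Delta}\subseteq J_{\Sigma}$. This inclusion induces a graded surjection $E_{\Delta}\twoheadrightarrow E_{\Sigma}$ carrying each $f_{\tau}+J_{\Delta}$ to $f_{\tau}+J_{\Sigma}$. The remainder of the proof is then a one-line contrapositive: suppose $\sigma\notin\shift\Delta$, so that there exist scalars $c_{\tau}$ with
\[
f_{\sigma}-\sum_{\substack{|\tau|=|\sigma|\\ \tau<_{L}\sigma}} c_{\tau}\,f_{\tau}\ \in\ J_{\Delta}.
\]
Since $J_{\Delta}\subseteq J_{\Sigma}$, this same element lies in $J_{\Sigma}$, witnessing $\sigma\notin\shift\Sigma$. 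Taking contrapositives yields $\shift\Sigma\subseteq\shift\Delta$, as required.

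The only real concern is that the definition of $\shift\Gamma$ depends on a basis $f_{1},\dots,f_{n}$ that is ``generic enough'' for $\Gamma$, and one must check that a single choice of basis is valid for both $\Sigma$ and $\Delta$ simultaneously. This is immediate: genericity amounts to a nonempty Zariski-open condition on the entries of the transition matrix — one such condition per complex in sight — and the intersection of finitely many nonempty Zariski-open subsets of affine space remains nonempty, at least over an infinite field (and one may harmlessly extend scalars if needed). A common generic choice therefore serves both $\Sigma$ and $\Delta$, and the argument above applies with that shared choice. This is the main (indeed the only) subtlety; everything else is a direct consequence of the fact that a larger ideal forces more linear dependences, which in turn forces more faces to be excluded by the greedy basis-selection that defines $\shift$.
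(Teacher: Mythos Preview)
The paper does not supply its own proof of this lemma; it is stated with a citation to Kalai's survey \cite{Kalai:2002} and used as a black box. Your argument is correct and is exactly the standard one: the containment $\Sigma\subseteq\Delta$ reverses to $J_{\Delta}\subseteq J_{\Sigma}$ on exterior Stanley--Reisner ideals, so any linear dependence among the generic monomials $f_{\tau}$ modulo $J_{\Delta}$ survives modulo $J_{\Sigma}$, and your treatment of the shared generic basis (intersecting finitely many nonempty Zariski-open conditions) is also the usual justification.
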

\begin{example}
Let $\Delta$ be the simplicial complex with facets $\{1,2\}$ and
$\{3,4\}$. Then it is easy to see that the unique shifted complex
with the same $f$-vector has facets $\{1,2\}$, $\{2,3\}$, and $\{4\}$,
hence that this complex is $\shift\Delta$. Then $\shift\{1,2\}=\shift\{3,4\}=\{1,2\}\subset\shift\Delta$. \end{example}
\begin{cor}
If $\Delta$ is a simplicial complex, then $\shift(\Delta^{(r)})=(\shift\Delta)^{(r)}$.\end{cor}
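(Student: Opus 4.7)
The plan is to obtain the containment $\shift(\Delta^{(r)}) \subseteq (\shift\Delta)^{(r)}$ from Lemma \ref{lem:ShiftedContainment} and preservation of the $f$-vector, and then upgrade this containment to equality by matching $f$-vectors.

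First, since $\Delta^{(r)} \subseteq \Delta$, Lemma \ref{lem:ShiftedContainment} gives $\shift(\Delta^{(r)}) \subseteq \shift\Delta$. The complex $\Delta^{(r)}$ contains no face of dimension greater than $r$, so $f_i(\Delta^{(r)}) = 0$ for $i > r+1$. By Lemma \ref{lem:BasicFacts}(3), the same holds for $\shift(\Delta^{(r)})$, so every face of $\shift(\Delta^{(r)})$ has dimension at most $r$. Combined with $\shift(\Delta^{(r)}) \subseteq \shift\Delta$, this yields $\shift(\Delta^{(r)}) \subseteq (\shift\Delta)^{(r)}$.

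For the reverse containment, I would simply compare $f$-vectors. For $i \le r+1$, Lemma \ref{lem:BasicFacts}(3) gives
\[
f_i\bigl(\shift(\Delta^{(r)})\bigr) = f_i(\Delta^{(r)}) = f_i(\Delta) = f_i(\shift\Delta) = f_i\bigl((\shift\Delta)^{(r)}\bigr),
\]
while for $i > r+1$ both sides vanish. Two simplicial complexes with one contained in the other and sharing the same $f$-vector must be equal (a face of the larger complex not in the smaller would contradict the count in its dimension), so $\shift(\Delta^{(r)}) = (\shift\Delta)^{(r)}$.

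There is no real obstacle here: the argument is a two-line consequence of the basic lemmas stated just before the corollary, with the only subtlety being the observation that containment plus equal $f$-vectors forces equality.
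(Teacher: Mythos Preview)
Your proof is correct and follows exactly the same approach as the paper: use Lemma~\ref{lem:ShiftedContainment} (plus the dimension bound from $f$-vector preservation) to get $\shift(\Delta^{(r)}) \subseteq (\shift\Delta)^{(r)}$, then compare $f$-vectors via Lemma~\ref{lem:BasicFacts}(3) to force equality. The paper's version is simply more terse, leaving implicit the step where you observe $\dim \shift(\Delta^{(r)}) \le r$.
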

\begin{proof}
Lemma \ref{lem:ShiftedContainment} gives that $\shift(\Delta^{(r)})\subseteq(\shift\Delta)^{(r)}$,
and by Lemma \ref{lem:BasicFacts} part (3) the $f$-vectors are equal.
\end{proof}
Notice that if $\Delta$ is a shifted complex, then it is immediate
that $\Delta^{(r)}$ is shifted for every $r$.

If $\mathcal{A}$ is some $r$-family of sets, then $\shift\mathcal{A}$
is the pure $r$-skeleton of $\shift\Delta(\mathcal{A})$. (Kalai's
equivalent definition actually defines $\shift\Delta$ as a union
of the shift of its $r$-faces \cite[Section 2.1]{Kalai:2002}.) Kalai
proves:
\begin{lem}
\cite[Corollary 6.3]{Kalai:2002}\label{lem:ShiftInterestingFamily}
If $\mathcal{A}$ is an intersecting $r$-family, then $\shift\mathcal{A}$
is an intersecting $r$-family.
\end{lem}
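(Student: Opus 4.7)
The plan is to recast the intersection property as a wedge-product condition in the exterior algebra and then trace it through the definition of algebraic shifting. If $2r > n$ the statement is trivial (pigeonhole forces any two $r$-subsets of $[n]$ to share an element), so assume $2r \leq n$. For any $r$-family $\mathcal{B}$ of subsets of $[n]$, let $W_\mathcal{B} \subseteq \bigwedge^r F^n$ denote the span of $\{e_\sigma : \sigma \in \mathcal{B}\}$, where $e_\sigma = e_{i_1} \wedge \cdots \wedge e_{i_r}$ for $\sigma = \{i_1 < \cdots < i_r\}$. Since $e_\sigma \wedge e_\tau = \pm e_{\sigma \cup \tau}$ when $\sigma \cap \tau = \emptyset$ and vanishes otherwise, the family $\mathcal{B}$ is intersecting if and only if the wedge product sends $W_\mathcal{B} \otimes W_\mathcal{B}$ to zero inside $\bigwedge^{2r} F^n$.

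Recall next that $\shift \mathcal{A}$ is constructed by choosing a generic $g \in \operatorname{GL}(F^n)$, extending it to the induced algebra automorphism of $\bigwedge^{\bullet} F^n$, and taking a basis of $g(W_\mathcal{A})$ in reduced row-echelon form with respect to a chosen monomial order on $\{e_\sigma : |\sigma| = r\}$; the pivot subsets are precisely the elements of $\shift \mathcal{A}$. Because $g$ is an algebra automorphism, $g(W_\mathcal{A}) \wedge g(W_\mathcal{A}) = g(W_\mathcal{A} \wedge W_\mathcal{A}) = 0$, so the algebraic intersection condition transfers for free from $W_\mathcal{A}$ to $g(W_\mathcal{A})$. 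What remains is to push this condition down to the span $W_{\shift \mathcal{A}}$ of the leading monomials.

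This last step is a Gr\"obner-style leading-term argument. The echelon basis of $g(W_\mathcal{A})$ consists of vectors $v_\tau = e_\tau + \sum_{\tau' \notin \shift \mathcal{A}} c_{\tau, \tau'}\, e_{\tau'}$, one for each $\tau \in \shift \mathcal{A}$. Suppose for contradiction that $\tau_1, \tau_2 \in \shift \mathcal{A}$ are disjoint; expanding $v_{\tau_1} \wedge v_{\tau_2} = 0$ produces the term $\pm e_{\tau_1 \cup \tau_2}$ together with cross- and tail-terms of the form $e_{\tau_1 \cup \tau'}$, $e_{\tau' \cup \tau_2}$, and $e_{\tau' \cup \tau''}$ where $\tau', \tau'' \notin \shift \mathcal{A}$. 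The contradiction comes from identifying $e_{\tau_1 \cup \tau_2}$ as the unique extremal $(2r)$-monomial in the induced order, which forces its coefficient to be nonzero.

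The main obstacle is exactly this extremality claim. One must take the monomial order on $r$-subsets to be reverse lexicographic (the natural order in Kalai's framework) and verify that the induced order on $(2r)$-subsets is compatible with the wedge product in the sense that replacing any pivot $\tau_i$ by a strictly larger non-pivot $\tau'$ strictly moves the resulting union $\tau_1 \cup \tau_2$ in one direction, so that no cancellation of $e_{\tau_1 \cup \tau_2}$ is possible. This compatibility is precisely the reason reverse lex, rather than plain lex, is the correct order for algebraic shifting, and checking it is the technical heart of the argument.
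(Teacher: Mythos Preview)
The paper does not prove this lemma; it is quoted without argument from Kalai's survey \cite[Corollary 6.3]{Kalai:2002}.  Your outline is precisely the argument Kalai gives there: characterize an intersecting $r$-family by the vanishing $W_{\mathcal{A}}\wedge W_{\mathcal{A}}=0$ in $\bigwedge^{2r}F^{n}$, transport this identity through the generic $g$ (an algebra automorphism), and then pass to initial monomials.

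The step you flag as the main obstacle is short once written out.  Take the order on $r$-subsets to be $\sigma<\tau$ iff $\min(\sigma\triangle\tau)\in\sigma$ (this is the order Kalai uses; whether one calls it lex or reverse lex depends on whether one thinks of subsets as characteristic vectors or as square-free monomials).  Suppose $\tau_{1},\tau_{2}$ are disjoint and $\tau'>\tau_{1}$ with $\tau'\cap\tau_{2}=\emptyset$.  Since $\tau_{2}$ is disjoint from both $\tau_{1}$ and $\tau'$, we have $(\tau_{1}\cup\tau_{2})\triangle(\tau'\cup\tau_{2})=\tau_{1}\triangle\tau'$, and its minimum lies in $\tau_{1}\subseteq\tau_{1}\cup\tau_{2}$, whence $\tau'\cup\tau_{2}>\tau_{1}\cup\tau_{2}$.  (If instead $\tau'\cap\tau_{2}\neq\emptyset$, the wedge $e_{\tau'}\wedge e_{\tau_{2}}$ simply vanishes.)  The symmetric statement handles replacing $\tau_{2}$, and applying it twice handles the cross terms $e_{\tau'}\wedge e_{\tau''}$.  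Thus $e_{\tau_{1}\cup\tau_{2}}$ is the unique minimal monomial in $v_{\tau_{1}}\wedge v_{\tau_{2}}$, appearing with coefficient $\pm 1$, and the contradiction goes through.  With this paragraph inserted, your proof is complete and matches Kalai's.
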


\subsection{\label{sub:Near-cones}Near-cones}

A simplicial complex $\Delta$ is a \emph{near-cone} with respect
to an \emph{apex vertex $v$} if for every face $\sigma$, the set
$(\sigma\setminus\{w\})\cup\{v\}$ is also a face for each vertex
$w\in\sigma$. Equivalently, the boundary of every facet of $\Delta$
is contained in $v*\link_{\Delta}v$; another equivalent condition
is that $\Delta$ consists of $v*\link_{\Delta}v$ union some set
of facets not containing $v$ (but whose boundary is contained in
$\link_{\Delta}v$). If $\Delta$ is a cone with apex vertex $v$,
then obviously $\Delta=v*\link_{\Delta}v$, thus every cone is a near-cone.

Because the apex vertex is always the vertex with the largest link,
near-cones are relatively easy to work with in the context of intersection
theorems, as has been previously noticed in e.g. \cite{Snevily:1992,Borg:2007}.
In particular:
\begin{lem}
\label{cor:NearConeApexMaxlLink}If $\Delta$ is a near-cone with
apex vertex $v$, then $f_{r}(\link_{\Delta}w)\leq f_{r}(\link_{\Delta}v)$
for any vertex $w$ and all $r$.\end{lem}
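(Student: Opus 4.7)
The statement is essentially a cardinality comparison between two sets of faces, so the natural plan is to exhibit an explicit injection from $r$-faces of $\link_{\Delta}w$ into $r$-faces of $\link_{\Delta}v$. The near-cone hypothesis is exactly what allows us to replace the vertex $w$ by the apex $v$, so a ``swap $w$ and $v$'' map is the obvious candidate. We may assume $w \neq v$, since otherwise the inequality is trivial.

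Concretely, I would define
\[
\phi(\tau) \;=\; \begin{cases} \tau & \text{if } v \notin \tau, \\ (\tau \setminus \{v\}) \cup \{w\} & \text{if } v \in \tau, \end{cases}
\]
for each $r$-face $\tau$ of $\link_{\Delta}w$, and argue that $\phi$ is a well-defined injection into the $r$-faces of $\link_{\Delta}v$.

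To see $\phi(\tau) \in \link_{\Delta}v$: in the first case, $\tau \cup \{w\}$ is a face of $\Delta$ containing $w$, so the near-cone property (applied with apex $v$ and vertex $w$) gives that $\tau \cup \{v\}$ is also a face, and $v \notin \tau$ by assumption; hence $\tau \in \link_{\Delta}v$ with the right cardinality. In the second case, $\phi(\tau) \cup \{v\} = \tau \cup \{w\}$ is a face of $\Delta$ by definition of $\link_{\Delta}w$, and $v \notin \phi(\tau)$ since $w \neq v$; the cardinality is preserved because we remove $v$ and add $w$.

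For injectivity, note that within each case the map is clearly injective (the first case is the identity, and in the second case $\tau$ is recovered from $\phi(\tau)$ since $v \in \tau$). The two cases cannot collide either: if $v \notin \tau_1$ but $v \in \tau_2$, then $\phi(\tau_1) = \tau_1$ does not contain $w$ (because $\tau_1 \in \link_{\Delta}w$), whereas $\phi(\tau_2) = (\tau_2 \setminus \{v\}) \cup \{w\}$ does contain $w$. Thus $\phi$ is injective on the $r$-faces of $\link_{\Delta}w$, which proves the inequality $f_r(\link_{\Delta}w) \leq f_r(\link_{\Delta}v)$. The whole argument is quite short, and the only point requiring any care is invoking the near-cone property in exactly the right direction in the first case; no obstacle beyond bookkeeping is anticipated.
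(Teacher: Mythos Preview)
Your proof is correct and is essentially the same argument as the paper's: the paper phrases the injection in terms of $(r+1)$-faces containing $w$ versus those containing $v$, while you work directly in the links, but under the obvious correspondence $\tau \leftrightarrow \tau \cup \{w\}$ your map $\phi$ coincides with the paper's ``swap $w$ for $v$'' map. Your write-up is in fact more careful about verifying well-definedness and injectivity than the paper's one-line sketch.
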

\begin{proof}
For every $(r+1)$-face $\sigma$ containing $w$, either $v\in\sigma$
or else $(\sigma\setminus w)\cup v$ is an $r$-face containing $v$
but not $w$.
\end{proof}
Notice that $\Delta$ being a near-cone with apex vertex $v$ essentially
says that $\Delta$ is {}``shifted with respect to $v$''. In particular,
any shifted complex is a near-cone with apex vertex $v_{1}$. Nevo
examined the algebraic shift of a near-cone, showing:
\begin{lem}
\emph{(Nevo }\cite[Theorems 5.2 and 5.3]{Nevo:2005}\emph{)} \label{lem:ShiftedNearCone}
If $\Delta$ is a near-cone with apex $v$, let us consider $\shift(\link_{\Delta}v)$
as having ordered vertex set $\{v_{2},\dots,v_{n}\}$. Then 
\begin{enumerate}
\item $\link_{\shift\Delta}v_{1}=\shift(\link_{\Delta}v)$. 
\item $\shift\Delta=\left(v_{1}*\shift(\link_{\Delta}v)\right)\cup\mathcal{B}$,
where $\mathcal{B}$ is a set of facets not containing $v_{1}$. 
\end{enumerate}
\end{lem}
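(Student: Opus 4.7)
The plan is to reduce (2) to (1) and then prove (1) using a combination of shifting monotonicity, $f$-vector arithmetic, and preservation of Betti numbers under shifting. A key tool throughout is the homotopy-theoretic observation that a near-cone $\Delta = (v*L) \cup \mathcal{B}_\Delta$ (with $L = \link_\Delta v$) is homotopy equivalent to $\bigvee_{\sigma \in \mathcal{B}_\Delta} S^{|\sigma|-1}$: the subcomplex $v*L$ is contractible, and each additional facet $\sigma \in \mathcal{B}_\Delta$ attaches to it along the $(|\sigma|-2)$-sphere $\partial\sigma \subseteq L \subseteq v*L$, which is necessarily null-homotopic there. In particular, $\tilde\beta_{r-1}(\Delta) = b_r$, where $b_r$ counts the $r$-element facets of $\mathcal{B}_\Delta$. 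Since $\shift\Delta$ is shifted, it is itself a near-cone with apex $v_1$, so we automatically have a decomposition $\shift\Delta = (v_1 * \link_{\shift\Delta} v_1) \cup \mathcal{B}$; this makes (2) immediate once (1) is established.

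Write $L' = \link_{\shift\Delta} v_1$. To obtain the inclusion $\shift L \subseteq L'$, I would apply Lemma~\ref{lem:ShiftedContainment} to $v*L \subseteq \Delta$, getting $\shift(v*L) \subseteq \shift\Delta$, and then invoke the auxiliary identity $\shift(v*L) = v_1 * \shift L$. This identity itself deserves justification: $v*L$ is contractible, so (since shifting preserves reduced Betti numbers) $\shift(v*L)$ is an acyclic shifted complex, and the wedge-of-spheres formula above forces its non-cone facet collection to be empty, making $\shift(v*L)$ a cone with apex $v_1$. Matching $f$-vectors with $v*L$ together with the induced inclusion $\shift L \subseteq \shift(v*L)$ pins down its link at $v_1$ as $\shift L$. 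Passing to the link of $v_1$ in $v_1 * \shift L \subseteq \shift\Delta$ then yields $\shift L \subseteq L'$.

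For the reverse inclusion I plan an $f$-vector count. Writing $b_r'$ for the number of $r$-element facets in $\mathcal{B}$, the near-cone decompositions give $f_r(\Delta) = f_r(L) + f_{r-1}(L) + b_r$ and $f_r(\shift\Delta) = f_r(L') + f_{r-1}(L') + b_r'$. Applying the wedge-of-spheres observation to both $\Delta$ and $\shift\Delta$, together with preservation of Betti numbers under shifting, gives $b_r = \tilde\beta_{r-1}(\Delta) = \tilde\beta_{r-1}(\shift\Delta) = b_r'$. Combined with $f_r(\Delta) = f_r(\shift\Delta)$ this yields $f_r(L) + f_{r-1}(L) = f_r(L') + f_{r-1}(L')$ for all $r$, and a straightforward induction (base case $f_0 = 1$) forces $f_r(L) = f_r(L')$ throughout. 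Since $\shift L$ and $L'$ are complexes on the common ordered vertex set $\{v_2, \dots, v_n\}$ with equal $f$-vectors and with $\shift L \subseteq L'$, they must coincide, proving (1).

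I expect the principal obstacle to be the wedge-of-spheres description of a near-cone's homotopy type, since it underlies both the ``$\shift$ of a cone is a cone'' identity and the crucial equality $b_r = b_r'$. Fortunately this is a fairly standard CW-attachment argument, relying essentially on the fact that the contractible subcomplex $v*L$ supplies explicit null-homotopies for each boundary $\partial\sigma$.
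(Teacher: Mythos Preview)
The paper does not give its own proof of this lemma; it is quoted from Nevo \cite{Nevo:2005}, whose argument works directly with the exterior-algebra definition of $\shift$. So there is no in-paper proof to compare against, and your proposal should be judged on its own merits.

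Your reduction of (2) to (1) is fine, and the Betti-number/$f$-vector half of your argument is correct and rather nice: from the near-cone decompositions of $\Delta$ and $\shift\Delta$, preservation of $\tilde\beta_{r-1}$ and of $f_r$ does force $f_r(L')=f_r(L)$ for all $r$, where $L'=\link_{\shift\Delta}v_1$. That by itself already yields Corollary~\ref{cor:NearConePreserveFApex}, which is the only consequence of Lemma~\ref{lem:ShiftedNearCone} the paper actually uses.

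The gap is in your derivation of the inclusion $\shift L\subseteq L'$, specifically in the auxiliary ``cone identity'' $\shift(v*L)=v_1*\shift L$. You correctly show that $\shift(v*L)$ is an acyclic shifted complex, hence a genuine cone $v_1*M$ with $f(M)=f(L)$. But your claim that the inclusion $\shift L\subseteq v_1*M$ together with $f(M)=f(\shift L)$ ``pins down'' $M$ as $\shift L$ does not go through. First, shifted complexes with a given $f$-vector are not unique (e.g.\ on five vertices with $f=(1,5,5,1)$ there are two). Second, the inclusion is on the ambient ordered set $\{v_1,\dots,v_n\}$: $\shift L$ naturally occupies $\{v_1,\dots,v_{n-1}\}$ and in particular uses $v_1$, while $M$ sits on $\{v_2,\dots,v_n\}$; the containment $\shift L\subseteq v_1*M$ only yields $\link_{\shift L}v_1\subseteq M$ and (antistar of $v_1$ in $\shift L)\subseteq M$, neither of which has the right $f$-vector to force equality. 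In short, the axiomatic properties you invoke (monotonicity, preservation of $f$-vectors and Betti numbers) do not by themselves determine $M$ as $\shift L$; this step genuinely requires the algebraic definition, which is how Nevo (and Kalai before him, for the pure cone case) proves it. Granting the cone identity as a citable input, the rest of your argument is sound.
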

\begin{cor}
\label{cor:NearConePreserveFApex}If $\Delta$ is a near-cone with
apex $v$, then $f_{r}(\link_{\Delta}v)=f_{r}(\link_{\shift\Delta}v_{1})$
for all $r$.
\end{cor}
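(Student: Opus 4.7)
The plan is to chain together the two pieces of information we have about near-cones under algebraic shifting. First I would apply part (1) of Lemma \ref{lem:ShiftedNearCone}, which identifies $\link_{\shift \Delta} v_1$ with $\shift(\link_\Delta v)$ (with the appropriately reordered vertex set $\{v_2,\dots,v_n\}$). This replaces a statement about the shift of $\Delta$ with a statement about the shift of $\link_\Delta v$ itself.

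Once that identification is made, the corollary follows from the fact that algebraic shifting preserves $f$-vectors. Explicitly, part (3) of Lemma \ref{lem:BasicFacts} gives $f_r(\shift(\link_\Delta v)) = f_r(\link_\Delta v)$ for every $r$, and combining with the equality from Lemma \ref{lem:ShiftedNearCone}(1) yields $f_r(\link_{\shift \Delta} v_1) = f_r(\link_\Delta v)$, as required.

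There is no real obstacle here; the content is genuinely just a one-line composition of Nevo's lemma with the basic $f$-vector invariance of the shifting operation. The only thing to be careful about is that the two sides of Nevo's equality are indexed by different vertex sets ($v$ versus $v_1$), but since the $f$-vector depends only on the abstract simplicial complex, this relabeling is harmless.
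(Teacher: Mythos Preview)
Your proof is correct and is exactly the intended argument: the paper states this as an immediate corollary of Nevo's Lemma~\ref{lem:ShiftedNearCone}(1) together with the $f$-vector invariance of algebraic shifting (Lemma~\ref{lem:BasicFacts}(3)), without writing out any further details. Your remark about the harmless relabeling of vertex sets is accurate and addresses the only point that might give pause.
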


\subsection{Pure complexes, Cohen-Macaulay complexes, and depth}

A simplicial complex $\Delta$ is \emph{pure} if all facets of $\Delta$
have the same dimension. Graphs with a pure independence complex are
sometimes called \emph{well-covered}.

Let $F$ be either any field, or the ring of integers. A simplicial
complex $\Delta$ is \emph{Cohen-Macaulay over F} if its homology
satisfies $\tilde{H}_{i}(\link_{\Delta}\sigma;F)=0$ for all $i<\dim(\link_{\Delta}\sigma)$
and all faces $\sigma$ of $\Delta$ (including $\sigma=\emptyset$).
It is well-known that every Cohen-Macaulay complex is pure, and that
every skeleton of a Cohen-Macaulay complex is Cohen-Macaulay.

A simplicial complex is \emph{sequentially Cohen-Macaulay over $F$}
if the pure $r$-skeleton of $\Delta$ is Cohen-Macaulay (over $F$)
for all $r$. Thus, a pure sequentially Cohen-Macaulay complex is
Cohen-Macaulay. 

When we simply say that a simplicial complex $\Delta$ is (sequentially)
Cohen-Macaulay, with no mention of $F$, then we mean that $\Delta$
is (sequentially) Cohen-Macaulay over all $F$. For example, every
{}``shellable'' or {}``vertex decomposable'' complex is sequentially
Cohen-Macaulay over any $F$ \cite{Bjorner/Wachs:1996,Bjorner/Wachs:1997}. 

The main relationships between the Cohen-Macaulay property and shifting
are the following:
\begin{lem}
\cite[Theorem 11.3]{Bjorner/Wachs:1997} If $\Delta$ is shifted,
then $\Delta$ is {}``vertex decomposable'', hence sequentially
Cohen-Macaulay (over any $F$).
\end{lem}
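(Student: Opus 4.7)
The plan is to proceed by induction on the number of vertices, showing that the largest vertex $v_n$ in the shifting order serves as a shedding vertex, and then to invoke the standard fact that vertex decomposability implies sequential Cohen--Macaulayness over any ring, which is one of the main results of Bj\"orner--Wachs \cite{Bjorner/Wachs:1996, Bjorner/Wachs:1997}. Recall that in the nonpure setting a simplicial complex $\Delta$ is vertex decomposable if it is a simplex (the base case, including the trivial complexes $\emptyset$ and $\{\emptyset\}$), or if there is some vertex $v$ such that $\link_{\Delta} v$ and the deletion $\Delta\setminus v$ are both vertex decomposable and every facet of $\Delta\setminus v$ is a facet of $\Delta$.

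If $\Delta$ is a simplex, there is nothing to prove. Otherwise, let the vertex set be $\{v_1,\dots,v_n\}$ in the shifting order, and take $v=v_n$ as the candidate shedding vertex. The first routine step is to verify that both $\link_{\Delta} v_n$ and $\Delta\setminus v_n$ are shifted on the ordered vertex set $\{v_1,\dots,v_{n-1}\}$. The deletion is immediate from the definition of shifted (no face in $\Delta\setminus v_n$ contains $v_n$, so the replacement axiom is inherited). For the link, if $\tau\in\link_{\Delta}v_n$ contains $v_i$ and $j<i$, then $\tau\cup\{v_n\}$ is a face of $\Delta$ containing $v_i$, so $\bigl((\tau\cup\{v_n\})\setminus\{v_i\}\bigr)\cup\{v_j\}$ is a face of $\Delta$; since $j<i\leq n-1$ this face still contains $v_n$, and removing $v_n$ shows $(\tau\setminus\{v_i\})\cup\{v_j\}\in\link_{\Delta}v_n$. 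By induction, both subcomplexes are vertex decomposable.

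The step requiring a little care is the shedding condition: every facet of $\Delta\setminus v_n$ must be a facet of $\Delta$. Suppose $\sigma$ is a facet of $\Delta\setminus v_n$ but $\sigma\cup\{v_n\}\in\Delta$. If some $v_j\notin\sigma$ with $j<n$ exists, then applying the shifted property to the face $\sigma\cup\{v_n\}$ (which contains $v_n$) with the replacement $v_n\rightsquigarrow v_j$ yields $\sigma\cup\{v_j\}\in\Delta$, a strictly larger face not containing $v_n$, contradicting the facet-maximality of $\sigma$ in $\Delta\setminus v_n$. Otherwise $\sigma=\{v_1,\dots,v_{n-1}\}$ and $\sigma\cup\{v_n\}$ is the full simplex on $V(\Delta)$, forcing $\Delta$ itself to be a simplex and returning us to the base case. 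This establishes the shedding condition, and hence $\Delta$ is vertex decomposable.

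The hardest step, conceptually, is the shedding verification above, because it is the only part that genuinely uses the shifting axiom in a nonformal way; everything else is bookkeeping. Once vertex decomposability is in hand, the sequential Cohen--Macaulay conclusion over any $F$ follows from the standard implication \emph{vertex decomposable} $\Rightarrow$ \emph{shellable} $\Rightarrow$ \emph{sequentially Cohen--Macaulay} established in \cite{Bjorner/Wachs:1996, Bjorner/Wachs:1997}, with no dependence on the characteristic or coefficient ring.
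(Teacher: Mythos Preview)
Your argument is correct and is essentially the standard proof that shifted complexes are vertex decomposable (choosing the last vertex $v_n$ as the shedding vertex, verifying that link and deletion remain shifted, and using shiftedness to check the shedding condition). Note, however, that the paper does not supply its own proof of this lemma: it is stated purely as a citation of Bj\"orner--Wachs \cite[Theorem 11.3]{Bjorner/Wachs:1997}, so there is no in-paper proof to compare against. Your write-up simply reproduces the well-known argument behind that cited result.
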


\begin{lem}
\label{lem:ShiftOfCMisPure}\emph{ (\cite[Theorem 4.1]{Kalai:2002},
see also \cite[Proposition 8.4]{Aramova/Herzog:2000})} $\shift_{F}\Delta$
is pure if and only if $\Delta$ is Cohen-Macaulay (over $F$).
\end{lem}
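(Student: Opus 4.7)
The plan is to relate both purity of $\shift\Delta$ and Cohen-Macaulayness of $\Delta$ to a single invariant that is preserved by algebraic shifting, namely the depth of the (exterior) face ring. The proof then reduces to a standard dichotomy for shifted complexes.

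First, I would recall that the exterior algebraic shift $\shift_F\Delta$ can be realized as the simplicial complex whose exterior Stanley-Reisner ideal equals the generic initial ideal of $J_\Delta$ with respect to an appropriate term order. Invoking the theorem of Aramova-Herzog that passing to the generic initial ideal in the exterior algebra preserves the depth of the face ring, and hence preserves the Cohen-Macaulay property, one obtains that $\Delta$ is Cohen-Macaulay over $F$ if and only if $\shift\Delta$ is Cohen-Macaulay over $F$. Equivalently, in Reisner-type language, the vanishing conditions on $\tilde H_i(\link_\Delta\sigma;F)$ pass over to the analogous conditions for links in $\shift\Delta$, even though the correspondence is not link-by-link.

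Second, I would argue that for a shifted complex $\Sigma$ the Cohen-Macaulay and pure properties coincide. One direction is standard: every Cohen-Macaulay complex is pure. For the converse, the Bj\"orner-Wachs theorem quoted in the excerpt states that shifted complexes are sequentially Cohen-Macaulay, so the pure $r$-skeleton of $\Sigma$ is Cohen-Macaulay for every $r$. If in addition $\Sigma$ is pure of dimension $d$, then $\Sigma$ coincides with its own pure $d$-skeleton, which is therefore Cohen-Macaulay.

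Combining the two steps gives the lemma: $\Delta$ is Cohen-Macaulay over $F$ iff $\shift\Delta$ is Cohen-Macaulay over $F$ iff (since $\shift\Delta$ is shifted) $\shift\Delta$ is pure. The principal obstacle is the first step: preservation of the Cohen-Macaulay property under algebraic shifting is not at all transparent from the topological definition of CM via vanishing of link homology, and genuinely requires the ring-theoretic interpretation of $\shift\Delta$ as a generic initial ideal together with the depth-preservation result of Aramova-Herzog in the exterior algebra. Once that machinery is imported, the rest of the argument is essentially bookkeeping about shifted complexes and their skeleta.
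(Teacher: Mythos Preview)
The paper does not give its own proof of this lemma; it is stated with citations to Kalai and to Aramova--Herzog and then used as a black box. So there is no in-paper argument to compare against.

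Your proposal is correct. The two ingredients you isolate are exactly the right ones: (i) exterior algebraic shifting preserves depth of the face ring (the Aramova--Herzog result cited in the lemma), so $\Delta$ is Cohen--Macaulay over $F$ iff $\shift_F\Delta$ is; and (ii) for a shifted complex, Cohen--Macaulay and pure coincide, which you deduce cleanly from the Bj\"orner--Wachs theorem quoted just above the lemma (shifted $\Rightarrow$ sequentially Cohen--Macaulay) together with the observation that a pure complex equals its own top pure skeleton. This is essentially the Aramova--Herzog route to the result; Kalai's original argument phrases things somewhat differently, via an explicit combinatorial criterion on which faces lie in $\shift\Delta$, but the content is the same. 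Your identification of step (i) as the genuine obstacle---requiring the ring-theoretic interpretation of $\shift\Delta$ via generic initial ideals rather than anything visible from the topological definition---is accurate.
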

Duval \cite{Duval:1996} also examined the algebraic shift of a sequentially
Cohen-Macaulay complex, and more generally of Cohen-Macaulay skeletons.
A result of his that will be of particular interest to us is:
\begin{cor}
\label{lem:ShiftOfCMSkel}\emph{(Duval \cite[Corollary 4.5]{Duval:1996})}
The minimum facet dimension of $\shift_{F}\Delta$ is $\geq d$ if
and only if $\Delta^{(d)}$ is Cohen-Macaulay (over $F$).
\end{cor}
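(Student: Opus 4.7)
The plan is to derive this corollary by packaging two previously recorded facts: the identity $\shift(\Delta^{(d)}) = (\shift \Delta)^{(d)}$ established just after Lemma \ref{lem:ShiftedContainment}, and Lemma \ref{lem:ShiftOfCMisPure}, which says $\shift_{F}\Sigma$ is pure if and only if $\Sigma$ is Cohen-Macaulay over $F$. The bridge between the facet dimension condition on $\shift_F\Delta$ and the purity condition appearing in Lemma \ref{lem:ShiftOfCMisPure} is the observation that, because $\shift_F\Delta$ is a shifted complex, ``minimum facet dimension of $\shift_F\Delta$ is $\geq d$'' is equivalent to ``$(\shift_F\Delta)^{(d)}$ is pure of dimension $d$''.

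First I would verify that equivalence. Suppose every facet of $\shift_F\Delta$ has dimension $\geq d$, and let $\tau$ be any face of $(\shift_F\Delta)^{(d)}$ with $\dim \tau < d$. Since $\tau$ is not a facet of $\shift_F\Delta$, it is strictly contained in some face; iterating, we reach a facet of dimension $\geq d$ that contains a subface of dimension exactly $d$ still containing $\tau$. Thus $\tau$ is not a facet of $(\shift_F\Delta)^{(d)}$, so $(\shift_F\Delta)^{(d)}$ is pure of dimension $d$. Conversely, if $(\shift_F\Delta)^{(d)}$ is pure, then no face of $\shift_F\Delta$ of dimension $<d$ is maximal, which forces every facet of $\shift_F\Delta$ to have dimension $\geq d$.

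Next I would feed this through the skeleton-shift identity: $(\shift_F\Delta)^{(d)}$ is pure if and only if $\shift_F(\Delta^{(d)})$ is pure, and by Lemma \ref{lem:ShiftOfCMisPure} the latter is equivalent to $\Delta^{(d)}$ being Cohen-Macaulay over $F$. Chaining the three equivalences yields the stated corollary.

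The one place requiring care is the equivalence in the first paragraph, since it is easy to confuse the facets of a skeleton with the facets of the ambient complex; however the argument above handles this cleanly using nothing beyond the definition of facet. Beyond that the proof is essentially a recombination of results already collected in the section, which is why it is stated as a corollary rather than developed from scratch.
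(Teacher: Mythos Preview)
The paper does not give its own proof of this statement; it is quoted as a result of Duval with a citation and nothing more. So there is no in-paper argument to compare against.

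Your argument is correct and in fact shows that the corollary follows from facts already assembled in Section~\ref{sec:Shifting}: the skeleton identity $\shift(\Delta^{(d)})=(\shift\Delta)^{(d)}$ combined with Lemma~\ref{lem:ShiftOfCMisPure} gives the result once one knows that ``minimum facet dimension $\geq d$'' is equivalent to ``the $d$-skeleton is pure of dimension $d$.'' Two small remarks. First, you attribute that bridge to the fact that $\shift_F\Delta$ is shifted, but your argument for it never uses shiftedness; the equivalence holds for an arbitrary complex. Second, in your converse direction you write ``if $(\shift_F\Delta)^{(d)}$ is pure, then no face of dimension $<d$ is maximal,'' but this inference needs the $d$-skeleton to actually have dimension $d$, i.e.\ needs $d\le\dim\Delta$. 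Without that, a pure skeleton of dimension $<d$ would have maximal faces of dimension $<d$. This hypothesis is implicit in the corollary anyway (for $d>\dim\Delta$ the statement fails: a single point is Cohen-Macaulay, yet its shift has minimum facet dimension $0$), so this is not a genuine gap, just a place to flag the standing assumption $d\le\dim\Delta$.
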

Corollary \ref{lem:ShiftOfCMSkel} suggests the definition of the
\emph{depth of $\Delta$ over $F$} as\[
\depth_{F}\Delta=\max\{d\,:\,\Delta^{(d)}\mbox{ is Cohen-Macaulay over }F\}.\]
Thus, $\depth_{F}\Delta$ is the minimum facet dimension of $\shift_{F}\Delta$.
We note that $\depth_{F}\Delta$ is one less than the ring-theoretic
depth of the {}``Stanley-Reisner ring''\emph{ $F[\Delta]$} \cite[Theorem 3.7]{Smith:1990}.
Thus, just as in the ring-theoretic situation, $\Delta$ is Cohen-Macaulay
over $F$ if and only if $\depth_{F}\Delta=\dim\Delta$. If $\Delta$
is sequentially Cohen-Macaulay over $F$ then $\depth_{F}\Delta$
is the minimum facet dimension of $\Delta$.

By the definition of simplicial homology we have $\tilde{H}_{d}(\Delta^{(d+1)};F)=\tilde{H}_{d}(\Delta;F)$,
hence the easy equivalent characterization: \[
\depth_{F}\Delta=\max\{d\,:\,\tilde{H}_{i}(\link_{\Delta}\sigma;F)=0\mbox{ for all }\sigma\in\Delta\mbox{ and }i<d-\vert\sigma\vert\}.\]
In particular, we notice that $\depth_{F}\Delta$ is at most the minimal
facet dimension, since if $\sigma$ is a facet then $\tilde{H}_{-1}(\link_{\Delta}\sigma;F)=\tilde{H}_{-1}(\emptyset;F)=F$. 

The following result about the depth of the join of complexes will
be especially useful in Section \ref{sec:Applications}:
\begin{lem}
\label{lem:DepthOfJoin} Let $F$ be a field, and $\Delta_{1}$ and
$\Delta_{2}$ be simplicial complexes. Then $\depth_{F}(\Delta_{1}*\Delta_{2})=\depth_{F}\Delta_{1}+\depth_{F}\Delta_{2}+1$.\end{lem}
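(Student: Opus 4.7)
The plan is to use the homological characterization of depth given just before the statement, combined with two standard facts about simplicial joins: first, $\link_{\Delta_1 * \Delta_2}(\sigma_1 \cup \sigma_2) = \link_{\Delta_1}\sigma_1 * \link_{\Delta_2}\sigma_2$ whenever $\sigma_k \in \Delta_k$; and second, the K\"unneth-type formula for the reduced homology of a simplicial join over a field,
$$\tilde{H}_n(X * Y; F) \cong \bigoplus_{p+q = n-1} \tilde{H}_p(X; F) \otimes_F \tilde{H}_q(Y;F),$$
where we use the convention $\tilde{H}_{-1}(\{\emptyset\}; F) = F$ so that links of facets contribute correctly.

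I would first rewrite the characterization of $\depth_F$ in a symmetric minimum form. Since $\depth_F \Delta$ is the largest $d$ such that $\tilde H_i(\link_\Delta \sigma; F) = 0$ for every face $\sigma$ and every $i < d - |\sigma|$, one checks directly that
$$\depth_F \Delta = \min\bigl\{\, i + |\sigma| \,:\, \sigma \in \Delta \text{ and } \tilde H_i(\link_\Delta \sigma; F) \neq 0\,\bigr\}.$$
Apply this to $\Delta_1 * \Delta_2$: write an arbitrary face as a disjoint union $\sigma = \sigma_1 \cup \sigma_2$ (the decomposition is forced since $\Delta_1$ and $\Delta_2$ have disjoint vertex sets), use the link-in-join identity to replace $\link_{\Delta_1 * \Delta_2}\sigma$ by $\link_{\Delta_1}\sigma_1 * \link_{\Delta_2}\sigma_2$, and feed the resulting join into the K\"unneth formula. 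A summand $\tilde H_p(\link_{\Delta_1}\sigma_1;F) \otimes \tilde H_q(\link_{\Delta_2}\sigma_2;F)$ is nonzero precisely when both tensor factors are nonzero (no Tor contribution, since $F$ is a field), and the constraint $p + q = i - 1$ lets me separate the minimization:
$$\depth_F(\Delta_1 * \Delta_2) = \min_{\sigma_1, p}\{p + |\sigma_1|\} + \min_{\sigma_2, q}\{q + |\sigma_2|\} + 1 = \depth_F \Delta_1 + \depth_F \Delta_2 + 1,$$
where each inner minimum ranges over the pairs making the corresponding reduced homology nonvanishing. The two halves recombine into a valid pair $(\sigma, i)$ for the join because $\sigma_1$ and $\sigma_2$ automatically lie in disjoint vertex sets.

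The main obstacle is really just invoking the K\"unneth formula for the simplicial join in a self-contained way; in a full write-up I would cite a standard reference (it follows from the chain-level description of the join, combined with the ordinary K\"unneth theorem, which has no Tor term over a field). The field hypothesis is used exactly here, explaining why the lemma is not stated over $\mathbb{Z}$. A minor bookkeeping point is the edge case when $\sigma_k$ is a facet of $\Delta_k$, so that $\link_{\Delta_k}\sigma_k = \{\emptyset\}$ and $\tilde H_{-1} = F$; with the convention from the excerpt this fits into the same formula and corresponds to the bound $\depth_F \Delta_k \leq \dim \sigma_k$ that the paper already notes.
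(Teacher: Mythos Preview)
Your proof is correct and follows essentially the same route as the paper's: both use the decomposition of links in a join together with the K\"unneth formula for the reduced homology of a join over a field, applied to the homological characterization of depth stated just before the lemma. Your $\min$-reformulation of depth and the separation-of-variables step simply make explicit what the paper compresses into ``the result then follows.''
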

\begin{proof}
Faces of $\Delta_{1}*\Delta_{2}$ have the form $\sigma=\sigma_{1}\disjointunion\sigma_{2}$
where $\sigma_{i}$ is a face of $\Delta_{i}$, hence $\link_{\Delta_{1}*\Delta_{2}}\sigma=\link_{\Delta_{1}}\sigma_{1}*\link_{\Delta_{2}}\sigma_{2}$.
The result then follows from the standard algebraic topology fact
\cite[Corollary 4.23]{Jonsson:2008} that \linebreak{}
$\tilde{H}_{n+1}(\Delta_{1}*\Delta_{2};F)=\bigoplus_{i=-1}^{n+1}\tilde{H}_{i}(\Delta_{1};F)\otimes\tilde{H}_{n-i}(\Delta_{2};F)$.
\end{proof}
The reader is referred to \cite{Jonsson:2008} for additional background
on $\depth_{F}\Delta$ and the (sequentially) Cohen-Macaulay property.
We will henceforth take the field $F$ to be understood, and drop
it from our notation.

\subsection{Shifting independence complexes}

A graph $G$ is \emph{chordal} if every induced subgraph of $G$ which
is a cyclic graph has length 3. A graph is \emph{co-chordal} if its
complement graph is chordal.

Since the original question of Holroyd, Spencer and Talbot was restricted
to the independence complexes of graphs, one might ask when $\shift I(G)$
is isomorphic to $I(G')$ for some graph $G'$. The answer is easy,
given the necessary machinery. The \emph{Alexander dual} of $\Delta$,
denoted $\alexdual{\Delta}$, is the complex with facets $\{\sigma\,:\, V\setminus\sigma\mbox{ is a minimal non-face of }\Delta\}$.
See e.g. \cite[Section 6]{Kalai:1983} for more information and background
on Alexander duality.
\begin{thm}
Let $G$ be a graph. Then $\shift I(G)$ is the independence complex
$I(G')$ of some other graph $G'$ if and only if $G$ is co-chordal.\end{thm}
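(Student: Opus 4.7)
The plan is to reduce the equivalence to a flag condition on $\shift I(G)$, translate that through Alexander duality into a Cohen--Macaulay condition on $\alexdual{I(G)}$, and then apply the Eagon--Reiner and Fr\"oberg theorems.

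First I would observe that a simplicial complex $\Sigma$ is the independence complex of some graph if and only if every minimal non-face of $\Sigma$ has cardinality two (i.e., $\Sigma$ is a \emph{flag complex}); in that case one takes $G'$ to be the graph whose edges are the minimal non-faces of $\Sigma$. So the statement reduces to showing that $\shift I(G)$ is flag if and only if $G$ is co-chordal. Next I would translate the flag condition through Alexander duality. For any complex $\Delta$ on $n$ vertices, the facets of $\alexdual{\Delta}$ are precisely the complements of the minimal non-faces of $\Delta$, so $\Delta$ is flag if and only if $\alexdual{\Delta}$ is pure of dimension $n-3$. A theorem of Kalai (see \cite{Kalai:2002}) asserts that exterior algebraic shifting commutes with Alexander duality, $\alexdual{(\shift \Delta)}=\shift(\alexdual{\Delta})$. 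Applied to $\Delta=I(G)$, and noting that $\alexdual{I(G)}$ is already pure of dimension $n-3$ (its facets are the complements of the edges of $G$, each of size $n-2$), Lemma \ref{lem:ShiftOfCMisPure} yields that $\shift \alexdual{I(G)}$ is pure if and only if $\alexdual{I(G)}$ is Cohen--Macaulay.

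Finally I would invoke the Eagon--Reiner theorem, which identifies Cohen--Macaulayness of $\alexdual{I(G)}$ with the existence of a linear resolution for the Stanley--Reisner ideal of $I(G)$, together with Fr\"oberg's theorem that the edge ideal of $G$ (which is precisely the Stanley--Reisner ideal of $I(G)$) has a linear resolution exactly when $G^{c}$ is chordal. Chaining: $\shift I(G)=I(G')$ for some $G'$ iff $\shift I(G)$ is flag iff $\alexdual{\shift I(G)}$ is pure iff $\shift \alexdual{I(G)}$ is pure iff $\alexdual{I(G)}$ is Cohen--Macaulay iff $G$ is co-chordal. The main obstacle is invoking Kalai's commutation of shifting with Alexander duality, which is not among the lemmas recalled in Section \ref{sec:Shifting}; the remaining ingredients (Lemma \ref{lem:ShiftOfCMisPure} and the theorems of Eagon--Reiner and Fr\"oberg) are either already in the paper or are standard facts from combinatorial commutative algebra.
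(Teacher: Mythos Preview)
Your proof is correct and follows essentially the same route as the paper's own proof: both reduce the question to whether $\alexdual{(\shift I(G))}$ is pure of the appropriate dimension, invoke Kalai's commutation of shifting with Alexander duality (which the paper also cites, from \cite[Section 3.5.6]{Kalai:2002}), and then apply Lemma \ref{lem:ShiftOfCMisPure}. Your derivation of ``$\alexdual{I(G)}$ is Cohen--Macaulay iff $G$ is co-chordal'' via Eagon--Reiner plus Fr\"oberg is exactly the content of \cite[Proposition 8]{Eagon/Reiner:1998}, which the paper cites directly, so there is no substantive difference.
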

\begin{proof}
We need the following three facts about Alexander duality: 1) It is
clear from the definition that $\Delta$ is the independence complex
of a graph if and only if $\alexdual{\Delta}$ is pure $(n-2)$-dimensional,
where $n$ is the number of vertices of $\Delta$. 2) Alexander duality
and shifting commute, i.e. $\shift\Delta=\alexdual{\left(\shift\left(\alexdual{\Delta}\right)\right)}$
\cite[Section 3.5.6]{Kalai:2002}. 3) If $G$ is a graph, then $\alexdual{I(G)}$
is Cohen-Macaulay if and only if $G$ is co-chordal \cite[Proposition 8]{Eagon/Reiner:1998}.
The result is then immediate from Lemma \ref{lem:ShiftOfCMisPure}.\end{proof}
\begin{rem}
The family of independence complexes of graphs has been extensively
studied in the literature under the name of \emph{flag complexes}.
\end{rem}
The shifted flag complexes were classified by Klivans, as follows:
Given a graph $G$, let $D(G)$ be $G\disjointunion\{v\}$ for a new
vertex $v$ ($D$ for {}``disjoint union''). Let $S(G)$ be the
graph on vertex set $V(G)\cup\{v\}$ for a new vertex $v$ and with
edge set $E(G)\cup\{wv\,:\, w\in V(G)\}$ ($S$ for {}``star'').
In the independence complex, we have $I(D(G))$ as the cone over $I(G)$,
and $I(S(G))$ as $I(G)\disjointunion\{v\}$, thus if $G$ is sequentially
Cohen-Macaulay then both $D(G)$ and $S(G)$ are. 

A graph is \emph{threshold} \cite{Mahadev/Peled:1995} if it is obtained
from a single vertex by some sequence of $D$ and $S$ operations.
Every threshold graph is both chordal and co-chordal. Since a $D$
operation adds a cone vertex, which can be taken as the initial vertex
in a shifted complex; and an $S$ operation adds a disjoint vertex,
which can be taken as the final vertex in a shifted complex, we have
proved inductively that the independence complex of any threshold
graph is shifted. Klivans \cite{Klivans:2007} showed the converse
result that all graphs with shifted independence complex are threshold.

We prove the following generalization of \cite[Theorem 1]{Klivans:2007}:
\begin{prop}
\label{pro:FlagNearCone} If $G$ is a graph such that $I(G)$ is
a near-cone, then $G$ is obtained from some graph $G_{0}$ by a sequence
of $D$ and $S$ operations, including at least one $D$ operation.\end{prop}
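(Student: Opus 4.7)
The plan is to induct on $\vert V(G)\vert$, driven by the following key structural observation: every neighbor of the apex $v$ in $G$ must be a universal vertex of $G$ (adjacent to every other vertex). To prove this, suppose $x \in N_G(v)$ but that some $w \neq v, x$ satisfies $wx \notin E(G)$. Then $\{w, x\}$ is a face of $I(G)$, and the near-cone property, applied by removing $w$ and adjoining $v$, forces $\{v, x\}$ to be a face as well, contradicting $vx \in E(G)$. Hence $x$ is adjacent to every vertex of $G$ distinct from itself.

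For the inductive step I would split into two cases. If $v$ is isolated in $G$, then $I(G)$ is a cone $v \ast I(G \setminus v)$, so $G = D(G \setminus v)$; taking $G_0 = G \setminus v$ yields a sequence consisting of a single $D$ operation. If instead $v$ has a neighbor $x$, then by the observation $x$ is universal in $G$, so the independent sets of $G$ are exactly those of $G \setminus x$ together with $\{x\}$ alone. Hence $G = S(G \setminus x)$. A short check shows that $I(G \setminus x)$ inherits the near-cone structure with apex $v$: for any face $\sigma$ of $I(G \setminus x)$ and any $w \in \sigma$, the set $(\sigma \setminus \{w\}) \cup \{v\}$, which is a face of $I(G)$ by the near-cone property, contains neither $x$ (since $x \notin \sigma$ and $v \neq x$) nor any edge of $G \setminus x$, and hence lies in $I(G \setminus x)$. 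Applying the inductive hypothesis to $G \setminus x$ produces a sequence of $D$ and $S$ operations from some $G_0$ that contains at least one $D$; appending a final $S$ operation expresses $G$ in the required form.

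The base case $\vert V(G)\vert = 1$ is subsumed by the isolated-apex subcase, with $G_0$ taken to be the empty graph. I expect no serious obstacle. The substantive content is the universality observation, which depends on noticing that applying the near-cone property to two-element faces already forces a rigid adjacency structure on the neighbors of the apex. The rest is induction bookkeeping, and the requirement of at least one $D$ operation is automatic: the induction terminates precisely when the apex becomes isolated, at which point a $D$ operation is applied.
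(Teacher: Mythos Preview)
Your proof is correct and rests on the same key observation as the paper's: every neighbor of the apex $v$ must be universal in $G$. The only difference is packaging---the paper immediately concludes $G = S^{k}D(G \setminus N[v])$ where $k = \vert N(v)\vert$, whereas you peel off the universal neighbors one at a time by induction; the content is identical.
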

\begin{proof}
Let $v$ be the apex vertex of $I(G)$, and suppose that $wv\in E(G)$.
Then $wx$ is also an edge for every $x\in V(G)$, since if $wx$
were a face of $I(G)$ then $wv$ would also be independent, a contradiction.
We see that $G=S^{k}D(G\setminus N[v])$, where $k$ is the number
of neighbors of $v$.\end{proof}
\begin{rem}
\label{rem:NontrivialNearConesHaveIsolated} Since the independence
complex of $S(G)$ has an isolated vertex, its minimum facet dimension
is 0. Hence Proposition \ref{pro:FlagNearCone} tells us that for
a graph $G$ we have that $I(G)$ is a near-cone with non-trivial
minimum facet dimension if and only if $G$ has an isolated vertex.
\end{rem}

\section{\label{sec:Main-theorem}Main theorem}

The following lemma follows from Borg's more general result \cite[Theorem 2.7]{Borg:2009}.
We use algebraic shifting to give a short new proof of the specific
result.
\begin{lem}
\label{lem:HolyroydTalbotShifted}\emph{(Borg \cite[Theorem 2.7]{Borg:2009})}
If $\Delta$ is a shifted complex having minimal facet cardinality
$k$, then $\Delta$ is $r$-EKR for $r\leq\frac{k}{2}$.\end{lem}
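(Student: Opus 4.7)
The plan is to reduce to the shifted case via algebraic shifting, identify $v_{1}$ as the optimal apex using the near-cone structure, and then show that every member of the resulting shifted intersecting family must contain $v_{1}$.

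First, I would replace $\mathcal{A}$ by $\shift\mathcal{A}$. By Lemma~\ref{lem:ShiftInterestingFamily} this remains intersecting; by Lemma~\ref{lem:BasicFacts}(3) its cardinality is unchanged; and since $\Delta$ is already shifted we have $\shift\Delta=\Delta$ by Lemma~\ref{lem:BasicFacts}(2), so Lemma~\ref{lem:ShiftedContainment} applied to $\Delta(\mathcal{A})\subseteq\Delta$ gives $\shift\mathcal{A}\subseteq\shift\Delta=\Delta$. Thus we may assume $\mathcal{A}$ itself is a shifted intersecting $r$-family of faces of $\Delta$.

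Second, since a shifted complex is a near-cone with apex $v_{1}$ (Section~\ref{sub:Near-cones}), Lemma~\ref{cor:NearConeApexMaxlLink} yields $\max_{v}f_{r-1}(\link_{\Delta}v)=f_{r-1}(\link_{\Delta}v_{1})$, which equals the number of $r$-faces of $\Delta$ containing $v_{1}$. So it suffices to prove $|\mathcal{A}|\leq f_{r-1}(\link_{\Delta}v_{1})$.

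Third, I would establish that every $\sigma\in\mathcal{A}$ contains $v_{1}$, which immediately gives the bound. Suppose for contradiction there is a $\sigma\in\mathcal{A}$ with $v_{1}\notin\sigma$. Iteratively replacing the largest element of $\sigma$ by the smallest unused index in $\{v_{2},v_{3},\dots\}$ (the shifted property of $\mathcal{A}$ keeps us in the family), we reach $\sigma_{0}=\{v_{2},v_{3},\dots,v_{r+1}\}\in\mathcal{A}$. Because $\Delta$ is shifted with minimum facet cardinality $k\geq 2r$, any facet can be shifted down to some $\{v_{1},\dots,v_{j}\}$ with $j\geq k\geq 2r$, so $\{v_{1},\dots,v_{2r}\}\in\Delta$. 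Then $\tau_{0}:=\{v_{1},v_{r+2},v_{r+3},\dots,v_{2r}\}$ is an $r$-face of $\Delta$ disjoint from $\sigma_{0}$, and the proof is completed by showing $\tau_{0}\in\mathcal{A}$, contradicting the intersecting hypothesis.

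The main obstacle is exactly this final step: combinatorial shifting of $\sigma_{0}$ only produces sets lying componentwise below $\sigma_{0}$, and $\tau_{0}$ is not comparable to $\sigma_{0}$ in that order once $r\geq 2$, so one cannot simply read off $\tau_{0}\in\mathcal{A}$ from the shifted property of $\mathcal{A}$. Overcoming this is where the extra strength of algebraic over combinatorial shifting should come in: because $\shift\mathcal{A}$ arises from a generic linear change of variables in the exterior algebra rather than from componentwise swaps, it captures additional faces---including, one expects, $\tau_{0}$---so that the intersecting property of $\shift\mathcal{A}$ forces the contradiction. This is precisely the use of algebraic shifting credited to Kalai's blog in the acknowledgements.
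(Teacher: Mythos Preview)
Your third step is the problem: the claim that every member of a shifted intersecting $r$-family must contain $v_{1}$ is false, and no amount of additional algebraic-shifting magic will rescue it. Take $\Delta$ to be the simplex on $\{v_{1},v_{2},v_{3},v_{4}\}$ (so $k=4$) and $r=2$. The family $\mathcal{A}=\{\{v_{1},v_{2}\},\{v_{1},v_{3}\},\{v_{2},v_{3}\}\}$ is a shifted intersecting $2$-family, hence $\shift\mathcal{A}=\mathcal{A}$ by Lemma~\ref{lem:BasicFacts}(2); yet $\{v_{2},v_{3}\}\in\mathcal{A}$ avoids $v_{1}$. In your notation $\sigma_{0}=\{v_{2},v_{3}\}$ and $\tau_{0}=\{v_{1},v_{4}\}$, and $\tau_{0}\notin\mathcal{A}$, so the hoped-for contradiction never materializes. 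The same $\mathcal{A}$ sits inside the simplex on five vertices, so the failure persists even when $r<k/2$. The underlying issue is that you are trying to prove something strictly stronger than the lemma asserts (essentially a uniqueness statement for the extremal family), and that stronger statement is simply not true in general.

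The paper does share your first two reductions, but then proceeds quite differently: it decomposes $\shift\mathcal{A}$ according to the \emph{last} vertex $v_{n}$, not the first. Writing $\mathcal{C}$ for the sets containing $v_{n}$ and $\mathcal{D}$ for the rest, one shows that $\mathcal{C}_{0}=\{\sigma\setminus\{v_{n}\}:\sigma\in\mathcal{C}\}$ is an intersecting $(r-1)$-family in $\link_{\Delta}v_{n}$ (this is where the hypothesis $2r\le k<n$ is used), while $\mathcal{D}$ is an intersecting $r$-family in $\Delta\setminus v_{n}$. Both targets are shifted complexes with controlled minimum facet cardinality, so induction (with base cases the simplex and $r=1$) bounds $|\mathcal{C}|\le f_{r-2}(\link_{\Delta}\{v_{1},v_{n}\})$ and $|\mathcal{D}|\le f_{r-1}(\link_{\Delta\setminus v_{n}}v_{1})$, and these sum to $f_{r-1}(\link_{\Delta}v_{1})$.
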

\begin{proof}
Let $\Delta$ have ordered vertex set $\{v_{1},\dots,v_{n}\}$, and
let $\mathcal{A}$ be an intersecting $r$-family of faces of $\Delta$.
We proceed by induction: our base cases are when $\Delta$ is a simplex
(Theorem \ref{thm:ErdosKoRadoV2}), and the trivial case where $r=1$.

If $\Delta$ is not a simplex and $r>1$, then by Lemmas \ref{lem:ShiftInterestingFamily}
and \ref{lem:ShiftedContainment}, we have that $\shift\mathcal{A}$
is a shifted intersecting $r$-family of faces of $\Delta=\shift\Delta$
with $\vert\shift\mathcal{A}\vert=\vert\mathcal{A}\vert$. We decompose
$\shift\mathcal{A}$ into the subfamilies $\mathcal{C}$ consisting
of all $\sigma\in\shift\mathcal{A}$ with $v_{n}\in\sigma$, and $\mathcal{D}=(\shift\mathcal{A})\setminus\mathcal{C}$,
so that $\vert\mathcal{A}\vert=\vert\shift\mathcal{A}\vert=\vert\mathcal{C}\vert+\vert\mathcal{D}\vert$.

We first consider $\mathcal{C}$. Let $\mathcal{C}_{0}=\{\sigma\setminus\{v_{n}\}\,:\,\sigma\in\mathcal{C}\}$,
so that $\vert\mathcal{C}\vert=\vert\mathcal{C}_{0}\vert$. Suppose
that $\mathcal{C}_{0}$ is not intersecting. Then there are $\sigma,\tau\in\mathcal{C}$
such that $\sigma\cap\tau=\{v_{n}\}$, and $\vert\sigma\cup\tau\vert<r+r\leq k<n$.
It follows that there is a $v_{\ell}\notin\sigma\cup\tau$. But then
$\tau'=(\tau\setminus\{v_{n}\})\cup\{v_{\ell}\}$ is in $\shift\mathcal{A}$
by the definition of shiftedness, and $\sigma\cap\tau'=\emptyset$,
which contradicts that $\shift\mathcal{A}$ is intersecting. We conclude
that $\mathcal{C}_{0}$ is an intersecting $(r-1)$-family of faces
of $\link_{\Delta}v_{n}$. Since $\link_{\Delta}v_{n}$ is a shifted
complex with minimum facet cardinality at least $k-1$, we get that
$\vert\mathcal{C}\vert=\vert\mathcal{C}_{0}\vert\leq f_{r-2}(\link_{\Delta}\{v_{1},v_{n}\})$
by induction and Lemma \ref{cor:NearConeApexMaxlLink}.

We now consider $\mathcal{D}$. It is obvious that $\mathcal{D}$
is an intersecting $r$-family contained in the shifted complex $\Delta\setminus v_{n}$.
Since $\Delta$ is not a simplex, it follows easily from the definition
of shiftedness that the minimum facet cardinality of $\Delta\setminus v_{n}$
is at least $k$. By induction and Lemma \ref{cor:NearConeApexMaxlLink}
we have $\vert\mathcal{D}\vert\leq f_{r-1}(\link_{\Delta\setminus v_{n}}v_{1})$.

Putting our two parts together, we have \[
\vert\mathcal{A}\vert=\vert\mathcal{C}\vert+\vert\mathcal{D}\vert\leq f_{r-2}(\link_{\Delta}\{v_{1},v_{n}\})+f_{r-1}(\link_{\Delta\setminus v_{n}}v_{1})=f_{r-1}(\link_{\Delta}v_{1}).\qedhere\]
\end{proof}
\begin{rem}
Our requirement for Lemma \ref{lem:HolyroydTalbotShifted} on the
minimum facet cardinality seems much stronger than necessary. Our
essential need is for a parameter $k$ which we can control in both
$\Delta\setminus v_{n}$ and $\link_{\Delta}v_{n}$, and such that
$r\leq\frac{k}{2}$ forces $r<\frac{n}{2}$. Use of another such parameter
might give a stronger result version of Lemma \ref{lem:HolyroydTalbotShifted}.
Any strengthening of Lemma \ref{lem:HolyroydTalbotShifted} would
likely also strengthen Theorem \ref{thm:HolyroydTalbotDepthK}.

Holroyd and Talbot \cite[Section 3]{Holroyd/Talbot:2005} construct
several examples of independence complexes that are not $r$-EKR for
various $r$, which may give some intuition about what parameters
are tractable. (They in particular construct an example with maximum
facet cardinality $\ell$ such that $\Delta$ is not $\left\lfloor \frac{\ell}{2}\right\rfloor $-EKR.)
\end{rem}
By applying algebraic shifting to an arbitrary complex, we prove:
\begin{thm}
\label{thm:HolyroydTalbotDepthK} If $\Delta$ is a near-cone and
$F$ is an arbitrary field, then $\Delta$ is $r$-EKR for $r\leq\frac{\depth_{F}\Delta+1}{2}$.\end{thm}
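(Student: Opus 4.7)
The plan is to reduce to the shifted case by algebraic shifting, apply Lemma \ref{lem:HolyroydTalbotShifted}, and then use the near-cone structure to translate the bound back to $\Delta$. All of the essential machinery has already been assembled; the proof is essentially a bookkeeping exercise chaining together the earlier lemmas.

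More precisely, let $\mathcal{A}$ be an intersecting $r$-family of faces of $\Delta$ with $r \leq \tfrac{\depth_F \Delta + 1}{2}$, and consider $\shift \mathcal{A}$. By Lemma \ref{lem:ShiftedContainment} applied to $\Delta(\mathcal{A}) \subseteq \Delta$, together with the definition of $\shift \mathcal{A}$ as the pure $r$-skeleton of $\shift \Delta(\mathcal{A})$, the family $\shift \mathcal{A}$ is an $r$-family of faces of $\shift \Delta$, and Lemma \ref{lem:BasicFacts}(3) gives $|\shift \mathcal{A}| = |\mathcal{A}|$. By Lemma \ref{lem:ShiftInterestingFamily}, $\shift \mathcal{A}$ is still intersecting. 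Now $\shift \Delta$ is shifted (Lemma \ref{lem:BasicFacts}(1)), and by the definition of $\depth_F$ its minimum facet cardinality is exactly $\depth_F \Delta + 1 \geq 2r$. Therefore Lemma \ref{lem:HolyroydTalbotShifted} applies and yields
\[
|\mathcal{A}| \;=\; |\shift \mathcal{A}| \;\leq\; f_{r-1}(\link_{\shift \Delta} v_1).
\]

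To finish, we must compare $f_{r-1}(\link_{\shift \Delta} v_1)$ to the quantity $\max_{w \in V(\Delta)} f_{r-1}(\link_\Delta w)$ appearing in the definition of $r$-EKR. This is exactly where the near-cone hypothesis enters: letting $v$ be the apex of $\Delta$, Corollary \ref{cor:NearConePreserveFApex} gives $f_{r-1}(\link_{\shift \Delta} v_1) = f_{r-1}(\link_\Delta v)$, and Lemma \ref{cor:NearConeApexMaxlLink} shows that the apex vertex $v$ realizes the maximum of $f_{r-1}(\link_\Delta w)$ over all $w \in V(\Delta)$. Combining these two identities with the bound above yields $|\mathcal{A}| \leq \max_{w} f_{r-1}(\link_\Delta w)$, which is the $r$-EKR conclusion.

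There is no real obstacle in this argument, since the heavy lifting has been done in the preceding sections: Lemma \ref{lem:HolyroydTalbotShifted} handles shifted complexes, and Corollary \ref{cor:NearConePreserveFApex} (via Nevo's Lemma \ref{lem:ShiftedNearCone}) ensures that algebraic shifting of a near-cone preserves the $f$-vector of the apex link. The only thing to notice is that the bound $r \leq \tfrac{\depth_F \Delta + 1}{2}$ in the theorem is chosen precisely so that the minimum facet cardinality $\depth_F \Delta + 1$ of $\shift \Delta$ satisfies the hypothesis of Lemma \ref{lem:HolyroydTalbotShifted}.
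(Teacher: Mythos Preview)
Your proposal is correct and follows essentially the same route as the paper: shift $\mathcal{A}$ into $\shift\Delta$, use that $\shift\Delta$ has minimum facet cardinality $\depth_F\Delta+1$ to invoke Lemma~\ref{lem:HolyroydTalbotShifted}, and then pull the bound back to $\Delta$ via Corollary~\ref{cor:NearConePreserveFApex} and Lemma~\ref{cor:NearConeApexMaxlLink}. The only cosmetic difference is that you spell out a bit more explicitly why $\shift\mathcal{A}\subseteq\shift\Delta$ and why $v_1$ realizes the maximum link size in $\shift\Delta$, whereas the paper absorbs these into the cited lemmas.
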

\begin{proof}
Let $\mathcal{A}$ be an intersecting $r$-family of faces of $\Delta$.
By Lemma \ref{cor:NearConeApexMaxlLink}, we need to show that $\vert\mathcal{A}\vert\leq f_{r-1}(\link_{\Delta}v)$
for the apex vertex $v$. 

Apply algebraic shifting. $\shift_{F}\mathcal{A}$ is an intersecting
$r$-family of faces of $\shift\Delta$ with $\vert\shift\mathcal{A}\vert=\vert\mathcal{A}\vert$
by Lemmas \ref{lem:ShiftInterestingFamily} and \ref{lem:ShiftedContainment}.
By Lemma \ref{lem:ShiftOfCMSkel} and the following discussion, the
minimum facet cardinality of $\shift_{F}\Delta$ is $\depth_{F}\Delta+1$,
hence $\vert\mathcal{A}\vert\leq f_{r-1}(\link_{\shift\Delta}v_{1})=f_{r-1}(\link_{\Delta}v)$
by Lemma \ref{lem:HolyroydTalbotShifted} and Corollary \ref{cor:NearConePreserveFApex}.
\end{proof}
To the best of my knowledge, Theorem \ref{thm:HolyroydTalbotDepthK}
is the first `new' intersection theorem to be proved by algebraic
shifting.
\begin{cor}
\label{cor:SeqCMisrEKR} If $\Delta$ is a sequentially Cohen-Macaulay
near-cone with minimum facet cardinality $k$, then $\Delta$ is $r$-EKR
for $r\leq\frac{k}{2}$.\end{cor}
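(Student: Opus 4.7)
The plan is to derive this corollary as a direct specialization of Theorem \ref{thm:HolyroydTalbotDepthK}. That theorem guarantees the $r$-EKR property for a near-cone whenever $r \leq (\depth_F \Delta + 1)/2$, so the entire task reduces to showing that, under the sequentially Cohen-Macaulay hypothesis, the quantity $\depth_F \Delta + 1$ is at least $k$, the minimum facet cardinality. Once that inequality is in hand, the bound $r \leq k/2$ forces $r \leq (\depth_F \Delta + 1)/2$, and the conclusion is immediate.

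The key input is the characterization of depth for sequentially Cohen-Macaulay complexes recorded in the discussion following Corollary \ref{lem:ShiftOfCMSkel}: if $\Delta$ is sequentially Cohen-Macaulay over $F$, then $\depth_F \Delta$ equals the minimum facet dimension of $\Delta$. Since dimension is one less than cardinality for a simplex, a minimum facet cardinality of $k$ translates to a minimum facet dimension of $k-1$, giving $\depth_F \Delta = k-1$ and hence $\depth_F \Delta + 1 = k$. Thus the hypothesis of Theorem \ref{thm:HolyroydTalbotDepthK} is satisfied exactly when $r \leq k/2$.

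Since this corollary is essentially a bookkeeping application of the main theorem together with an already-established identity, there is no substantial obstacle to overcome; the only thing to be careful about is the indexing convention (cardinality versus dimension), and one should cite Theorem \ref{thm:HolyroydTalbotDepthK} together with the discussion immediately following Corollary \ref{lem:ShiftOfCMSkel} when writing it out.
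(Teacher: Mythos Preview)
Your proposal is correct and matches the paper's intended (implicit) argument: the corollary is stated without proof immediately after Theorem \ref{thm:HolyroydTalbotDepthK}, and the only ingredient needed is precisely the fact, recorded after Corollary \ref{lem:ShiftOfCMSkel}, that a sequentially Cohen-Macaulay complex has $\depth_F\Delta$ equal to its minimum facet dimension $k-1$. Your handling of the cardinality/dimension shift is accurate.
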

\begin{rem}
Borg's aforementioned result \cite[Theorem 2.7]{Borg:2009} generalizes
Lemma \ref{lem:HolyroydTalbotShifted} to include non-uniform families
(i.e., sets of different sizes), and to $t$-intersecting families
(i.e., to families where $\vert A\cap B\vert\geq t$). By Kalai \cite[Corollary 6.3 and following]{Kalai:2002},
algebraic shifting preserves the $t$-intersecting property for any
$r$-family, hence a reduction to \cite[Theorem 2.7]{Borg:2009} similar
to that in Theorem \ref{thm:HolyroydTalbotDepthK} will show that
if $\Delta$ is a $t$-fold near-cone (i.e., shifted with respect
to its first $t$ elements) with depth equal to its minimum facet
dimension, then \cite[Conjecture 2.7]{Borg:2009} holds for uniform
$r$-families of faces in $\Delta$. In particular, \cite[Conjecture 2.7]{Borg:2009}
holds for sequentially Cohen-Macaulay $t$-fold near-cones.
\end{rem}

\section{\label{sec:Applications}Applications}

It is immediate from the definitions that if $G=G_{1}\disjointunion G_{2}$,
then $I(G)$ decomposes as the join $I(G_{1})*I(G_{2})$. In particular,
if $G$ has an isolated vertex $v$ then $I(G)$ is a cone over $v$,
as discussed in detail in Section \ref{sub:Near-cones}. We will call
a graph $G$ \emph{sequentially Cohen-Macaulay} if its independence
complex $I(G)$ is sequentially Cohen-Macaulay. An immediate consequence
of Corollary \ref{cor:SeqCMisrEKR} is:
\begin{thm}
\label{thm:HolyroydTalbotSCMforGraphs} If $G$ is a sequentially
Cohen-Macaulay graph with an isolated vertex, then $G$ satisfies
Conjecture \ref{con:HolroydTalbot}. 
\end{thm}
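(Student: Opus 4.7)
The plan is to observe that this theorem is essentially a direct application of Corollary \ref{cor:SeqCMisrEKR}; all the conceptual work was done in Theorem \ref{thm:HolyroydTalbotDepthK}. The task is simply to check that the hypotheses on $G$ translate exactly into the two hypotheses of that corollary, namely that $I(G)$ is a near-cone and that it is sequentially Cohen-Macaulay.

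First I would note that if $v$ is an isolated vertex of $G$, then $v$ is non-adjacent to every other vertex, so $\sigma \cup \{v\}$ is independent whenever $\sigma$ is. Hence $v$ is a cone vertex of $I(G)$, so that $I(G) = v * I(G \setminus \{v\})$. Every cone is in particular a near-cone (with the cone vertex as its apex), as already observed in Section \ref{sub:Near-cones}, so $I(G)$ is a near-cone. The hypothesis that $G$ is sequentially Cohen-Macaulay means, by our definition at the start of Section \ref{sec:Applications}, that $I(G)$ is sequentially Cohen-Macaulay. Letting $k$ denote the minimum facet cardinality of $I(G)$, Corollary \ref{cor:SeqCMisrEKR} applies directly and yields that $I(G)$ is $r$-EKR for every $r \leq k/2$, which is precisely the statement of Conjecture \ref{con:HolroydTalbot} for the graph $G$.

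There is no real obstacle: the proof is purely a matter of matching hypotheses. If anything were to require a brief remark, it would be the link between $v$ being isolated in $G$ and $v$ being a legitimate \emph{apex} vertex (as opposed to just any cone vertex) for the near-cone structure; but this is immediate because when $I(G)$ is a cone with apex $v$, every facet of $I(G)$ contains $v$, so the near-cone condition $(\sigma \setminus \{w\}) \cup \{v\} \in I(G)$ is trivially satisfied for every face $\sigma$ and every $w \in \sigma$.
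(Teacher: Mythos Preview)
Your proof is correct and matches the paper's approach exactly: the paper simply states that the theorem is an immediate consequence of Corollary~\ref{cor:SeqCMisrEKR}, and you have spelled out precisely why (isolated vertex $\Rightarrow$ cone $\Rightarrow$ near-cone, and sequentially Cohen-Macaulay by definition). There is nothing to add.
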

The family of sequentially Cohen-Macaulay graphs includes:
\begin{enumerate}
\item Chordal graphs. \cite{Francisco/VanTuyl:2007}
\item Graphs with no induced cycle of length other than $3$ or $5$. \cite{Woodroofe:2009a}
\item Bipartite graphs containing a vertex $v$ of degree $1$ such that
$G\setminus N[v]$ and $G\setminus N[w]$ (where $w$ is the unique
neighbor of $v$) recursively satisfy the same condition. \cite[Corollary 3.11]{VanTuyl/Villarreal:2008}
\item Incomparability graphs of shellable posets. \cite{Bjorner/Wachs:1996}
\item The minimal set of non-faces of the isocahedron, or any other polytope
where the set of minimal non-faces forms a graph. \cite{Bruggesser/Mani:1971}
\item Disjoint unions of sequentially Cohen-Macaulay graphs, since $I(G_{1}\disjointunion G_{2})=I(G_{1})*I(G_{2})$.
\end{enumerate}
In particular, we recover the following theorem of Hurlbert and Kamat:
\begin{cor}
\emph{\label{cor:HurlbertKamat-Chordal} (Hurlbert-Kamat }\cite[Theorem 1.22]{Hurlbert/Kamat:2009UNP}\emph{)}
If $G$ is a chordal graph with an isolated vertex, then $G$ satisfies
Conjecture \ref{con:HolroydTalbot}.
\end{cor}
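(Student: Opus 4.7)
The plan is essentially to read off this corollary from Theorem \ref{thm:HolyroydTalbotSCMforGraphs}, since chordal graphs appear as item (1) in the list of families known to be sequentially Cohen-Macaulay. So the proof reduces to two ingredients: (a) that $I(G)$ is sequentially Cohen-Macaulay whenever $G$ is chordal, and (b) that having an isolated vertex is preserved in passing to $I(G)$ as a near-cone, which is already noted in Remark \ref{rem:NontrivialNearConesHaveIsolated}.

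Concretely, first I would invoke the Francisco--Van Tuyl result \cite{Francisco/VanTuyl:2007}, cited as item (1) in the enumeration immediately above, to conclude that $I(G)$ is sequentially Cohen-Macaulay over every field $F$. Next, I would use the hypothesis that $G$ has an isolated vertex $v$: this vertex is adjacent to no other vertex, so $\{v,w\}$ is an independent set whenever $\{w\}$ is, and more generally $\sigma \cup \{v\}$ is a face of $I(G)$ for every face $\sigma$. Thus $v$ is a cone vertex of $I(G)$, which in particular makes $I(G)$ a near-cone with apex $v$.

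Having verified both hypotheses, I would then apply Theorem \ref{thm:HolyroydTalbotSCMforGraphs} (equivalently, Corollary \ref{cor:SeqCMisrEKR} with $\Delta = I(G)$) to conclude that $G$ satisfies Conjecture \ref{con:HolroydTalbot}, i.e., $I(G)$ is $r$-EKR for all $r \leq k/2$, where $k$ is the minimal facet cardinality of $I(G)$.

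There is no real obstacle: once the structural inputs from Section \ref{sec:Shifting} and the main theorem of Section \ref{sec:Main-theorem} are in hand, the chordal case is a one-line specialization. The only thing worth double-checking is that the cited Francisco--Van Tuyl result indeed gives sequential Cohen-Macaulayness of $I(G)$ for chordal $G$ over an arbitrary field (rather than, say, only in characteristic zero), since Theorem \ref{thm:HolyroydTalbotDepthK} is stated over a fixed field $F$; this is the case, so the argument goes through uniformly.
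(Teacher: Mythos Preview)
Your proposal is correct and matches the paper's approach exactly: the corollary is presented in the paper without proof, as an immediate specialization of Theorem~\ref{thm:HolyroydTalbotSCMforGraphs} using item~(1) (Francisco--Van~Tuyl) of the preceding list. Your additional remarks on the isolated vertex yielding a cone (hence near-cone) structure and on the field-independence of the Francisco--Van~Tuyl result are accurate and make explicit what the paper leaves implicit.
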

Obviously we also have that if $G$ is e.g. a threshold graph, then
$G$ satisfies Conjecture \ref{con:HolroydTalbot}. But Remark \ref{rem:NontrivialNearConesHaveIsolated}
tells us that this result is not an interesting improvement on Corollary
\ref{cor:HurlbertKamat-Chordal}, since in this case the minimum facet
cardinality of $I(G)$ is 1 unless $G$ has an isolated vertex.

\medskip{}

We apply Lemma \ref{lem:DepthOfJoin} for a result in a slightly different
direction:
\begin{prop}
\label{pro:DisjointUnionEKR} If $G=G_{1}\disjointunion\dots\disjointunion G_{n}$
is the disjoint union of $n\geq2r$ nonempty graphs, including at
least one isolated vertex, then $I(G)$ is $r$-EKR.\end{prop}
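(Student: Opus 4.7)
The plan is to reduce directly to Theorem \ref{thm:HolyroydTalbotDepthK} by producing a near-cone structure on $I(G)$ and a lower bound on $\depth I(G)$ large enough to force $r\leq\frac{\depth I(G)+1}{2}$. The starting observation is that disjoint unions of graphs correspond to joins of independence complexes, so $I(G)=I(G_{1})*I(G_{2})*\cdots*I(G_{n})$.

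First, I would handle the near-cone hypothesis. Since one of the $G_{i}$ is a single isolated vertex $v$, the factor $I(G_{i})$ is a single point, and so $I(G)$ is a cone with apex $v$ over the join of the remaining factors. Every cone is a near-cone, so Theorem \ref{thm:HolyroydTalbotDepthK} is applicable.

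Next, I would estimate $\depth I(G)$ using Lemma \ref{lem:DepthOfJoin}, applied inductively, to obtain
\[
\depth I(G)=\sum_{i=1}^{n}\depth I(G_{i})+(n-1).
\]
Each $G_{i}$ is nonempty, so $I(G_{i})$ contains at least one vertex; its pure $0$-skeleton is then a nonempty $0$-dimensional complex, which is Cohen-Macaulay over any field. Consequently $\depth I(G_{i})\geq 0$ for every $i$, and so
\[
\depth I(G)\geq n-1\geq 2r-1.
\]
Rearranged, this says $r\leq\frac{\depth I(G)+1}{2}$, which is precisely the hypothesis required by Theorem \ref{thm:HolyroydTalbotDepthK}; applying that theorem to the near-cone $I(G)$ concludes the argument.

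There is no serious obstacle: the only points to verify carefully are that $\depth$ of a nonempty complex is nonnegative (which follows because its pure $0$-skeleton is Cohen-Macaulay) and that Lemma \ref{lem:DepthOfJoin} can be iterated to the $n$-fold join, both of which are routine.
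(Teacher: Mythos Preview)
Your proposal is correct and matches the paper's own proof essentially line for line: the paper observes that the $0$-skeleton of any nonempty complex is Cohen-Macaulay (hence $\depth I(G_i)\geq 0$), iterates Lemma~\ref{lem:DepthOfJoin} to get $\depth I(G)\geq n-1$, and then invokes Theorem~\ref{thm:HolyroydTalbotDepthK}, relying on the earlier remark that an isolated vertex makes $I(G)$ a cone and hence a near-cone. Your write-up is simply more explicit about the near-cone verification and the arithmetic $n-1\geq 2r-1$.
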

\begin{proof}
The $0$-skeleton of any non-empty complex is Cohen-Macaulay, hence
$\depth I(G_{i})\geq0$, and by repeated application of Lemma \ref{lem:DepthOfJoin}
we get $\depth I(G)=\depth I(G_{1})*\cdots*I(G_{n})\geq n-1$. The
result then follows by Theorem \ref{thm:HolyroydTalbotDepthK}.
\end{proof}
Proposition \ref{pro:DisjointUnionEKR} significantly improves \cite[Theorem 8]{Holroyd/Spencer/Talbot:2005},
which proves the result in the special case where each $G_{i}$ is
a complete graph, path, or cycle. By considering graphs of depth 1,
we can do slightly better.
\begin{lem}
\label{lem:IndComplexDepth1} The independence complex of a graph
$G$ has depth $\geq1$ if and only if $\vert G\vert>1$ and the complement
graph $\bar{G}$ is connected.\end{lem}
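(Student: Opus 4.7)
The plan is to apply the equivalent characterization of depth stated just before the lemma:
\[
\depth_{F} \Delta \geq 1 \iff \tilde{H}_{i}(\link_{\Delta}\sigma; F) = 0 \text{ for every } \sigma \in \Delta \text{ and every } i < 1 - |\sigma|.
\]
Since $1 - |\sigma| \leq -1$ whenever $|\sigma| \geq 2$, the condition is vacuous for such faces, and only the cases $\sigma = \emptyset$ and $\sigma = \{v\}$ need to be examined.

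First I would handle $\sigma = \emptyset$, where the requirement becomes $\tilde{H}_{-1}(I(G); F) = 0$ and $\tilde{H}_{0}(I(G); F) = 0$. These say, respectively, that $I(G)$ has at least one vertex (so $|V(G)| \geq 1$) and that $I(G)$ is connected. Since the connected components of a simplicial complex coincide with those of its $1$-skeleton, and the $1$-skeleton of $I(G)$ has vertex set $V(G)$ and edges $\{uw : uw \notin E(G)\}$---that is, it is precisely $\bar{G}$---connectedness of $I(G)$ is equivalent to connectedness of $\bar{G}$.

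Next I would handle each $\sigma = \{v\}$, where the sole requirement $\tilde{H}_{-1}(\link_{I(G)}\{v\}; F) = 0$ is that $\link_{I(G)}\{v\}$ contains a vertex. Concretely, this means some $w \in V(G)$ satisfies $\{v,w\} \in I(G)$, i.e., $v$ has a non-neighbor in $G$, i.e., $v$ is not isolated in $\bar{G}$.

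Finally I would combine the cases. If $|V(G)| > 1$ and $\bar{G}$ is connected, then automatically no vertex of $\bar{G}$ is isolated, so all required vanishings hold. Conversely, if $\bar{G}$ is disconnected then the connectedness half of the $\sigma = \emptyset$ condition fails, while if $|V(G)| = 1$ the lone vertex is isolated in $\bar{G}$ and the $\sigma = \{v\}$ condition fails. The argument amounts to routine unpacking; the only care needed is with the $\tilde{H}_{-1}$ convention (nonzero precisely when the complex equals $\{\emptyset\}$) and with the degenerate case $|V(G)| = 1$, where $\bar{G}$ is vacuously connected yet its single vertex is isolated---this is exactly why the hypothesis $|V(G)| > 1$ cannot be dropped.
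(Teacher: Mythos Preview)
Your proof is correct and follows essentially the same idea as the paper's, just unpacked more explicitly. The paper's one-line argument observes that $I(G)^{(1)}=\bar G$ and invokes the standard fact that a $1$-dimensional complex is Cohen--Macaulay if and only if it is connected; you instead apply the homological characterization of depth directly and check the cases $\sigma=\emptyset$ and $\sigma=\{v\}$, which amounts to reproving that fact in situ. Your version has the minor advantage of making transparent why the hypothesis $|G|>1$ is needed (via the $\tilde H_{-1}$ condition on vertex links), a point the paper's proof leaves implicit in the phrase ``under the hypothesis.''
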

\begin{proof}
The complement graph $\bar{G}$ forms the $1$-skeleton of $I(G)$
under the hypothesis, and a $1$-dimensional complex is Cohen-Macaulay
if and only if it is connected.\end{proof}
\begin{example}
Let $C_{n}$ be the cyclic graph on $n$ vertices. If $n\geq5$, then
$C_{n}$ satisfies the conditions of Lemma \ref{lem:IndComplexDepth1},
hence $\depth I(C_{n})\geq1$. But the cyclic graph $C_{4}$ on $4$
vertices has disconnected complement graph, hence $\depth I(C_{4})=0$.\end{example}
\begin{prop}
\label{pro:DisjointUnionEKR-1Skel} Let $G=G_{1}\disjointunion\dots\disjointunion G_{n}$
be the disjoint union of $n$ graphs, including at least one isolated
vertex. Suppose that $m$ of the $G_{i}$ satisfy the conditions of
Lemma \ref{lem:IndComplexDepth1}. Then $I(G)$ is $r$-EKR for $r\leq\frac{n+m}{2}$.
\end{prop}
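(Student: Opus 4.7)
The plan is to mirror the proof of Proposition \ref{pro:DisjointUnionEKR}, but to take advantage of the sharper lower bound $\depth I(G_i) \geq 1$ whenever $G_i$ satisfies the conditions of Lemma \ref{lem:IndComplexDepth1}. Since $G$ has an isolated vertex, $I(G)$ is a cone (over that vertex), and hence in particular a near-cone, so Theorem \ref{thm:HolyroydTalbotDepthK} applies and it suffices to show that $\depth I(G) \geq n + m - 1$.

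First I would use the standard identification $I(G_1 \disjointunion \cdots \disjointunion G_n) = I(G_1) * \cdots * I(G_n)$ and repeatedly apply Lemma \ref{lem:DepthOfJoin} to obtain
\[
\depth I(G) \;=\; \sum_{i=1}^{n} \depth I(G_i) \;+\; (n-1).
\]
Next, for the $m$ indices $i$ with $G_i$ satisfying the hypothesis of Lemma \ref{lem:IndComplexDepth1}, I would invoke that lemma to conclude $\depth I(G_i) \geq 1$. For the remaining $n-m$ components, the $0$-skeleton of any nonempty simplicial complex is Cohen-Macaulay (a nonempty discrete set), so $\depth I(G_i) \geq 0$. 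Summing gives $\sum_i \depth I(G_i) \geq m$, whence $\depth I(G) \geq n + m - 1$.

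Finally, I would feed this into Theorem \ref{thm:HolyroydTalbotDepthK}: since $I(G)$ is a near-cone, it is $r$-EKR for
\[
r \;\leq\; \frac{\depth I(G) + 1}{2} \;\leq\; \frac{n+m}{2},
\]
which is exactly the desired bound. The argument is essentially a bookkeeping refinement of Proposition \ref{pro:DisjointUnionEKR}; there is no real obstacle, as all the nontrivial work has been front-loaded into Theorem \ref{thm:HolyroydTalbotDepthK}, Lemma \ref{lem:DepthOfJoin}, and Lemma \ref{lem:IndComplexDepth1}. The only point requiring a moment's care is confirming that the assumption of at least one isolated vertex (used to make $I(G)$ a near-cone) is independent of which $G_i$'s contribute to the count $m$; this is automatic, since an isolated vertex contributes a component of size $1$ which does not satisfy the hypothesis of Lemma \ref{lem:IndComplexDepth1} but still adds $0$ to the depth bound and $1$ to the join count $n$.
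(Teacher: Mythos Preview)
Your argument is correct and is exactly the approach the paper intends: it simply says ``The proof is exactly as in Proposition \ref{pro:DisjointUnionEKR},'' and you have spelled out that bookkeeping refinement. One small slip: in your final displayed chain the second inequality is reversed --- since $\depth I(G) \geq n+m-1$ you want $\frac{n+m}{2} \leq \frac{\depth I(G)+1}{2}$, so that $r \leq \frac{n+m}{2}$ implies $r \leq \frac{\depth I(G)+1}{2}$.
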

\noindent The proof is exactly as in Proposition \ref{pro:DisjointUnionEKR}.

\section{\label{sec:Further-questions}Further questions}

As we have discussed, for $\Delta$ to be $r$-EKR means that every
maximal intersecting $r$-family of faces $\mathcal{A}$ has $\vert\mathcal{A}\vert\leq\max_{v\in V(\Delta)}f_{r-1}(\link_{\Delta}v)$.
We say that $\Delta$ is \emph{strictly $r$-EKR }if every maximum
cardinality intersecting $r$-family of faces $\mathcal{A}$ consists
of the $r$-faces of $v*(\link_{\Delta}v)^{(r-1)}$ for some $v$.
That is to say, every maximum intersecting $r$-family $\mathcal{A}$
satisfies $\bigcap_{A\in\mathcal{A}}A\neq\emptyset$. Hilton and Milner
\cite{Hilton/Milner:1967} improved Theorem \ref{thm:ErdosKoRadoV2}
to:
\begin{thm}
\emph{(Hilton-Milner }\cite{Hilton/Milner:1967}\emph{)} If $\Delta$
is the simplex with $n$ vertices, then $\Delta$ is strictly $r$-EKR
for $2\leq r<\frac{n}{2}$.
\end{thm}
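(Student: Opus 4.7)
The plan is to adapt the classical Hilton-Milner argument via shifting. Let $\mathcal{A}$ be an intersecting $r$-family of $r$-subsets of $[n]$ achieving the Erd\H{o}s-Ko-Rado bound $|\mathcal{A}| = \binom{n-1}{r-1}$; the goal is to show that $\bigcap_{A \in \mathcal{A}} A \neq \emptyset$. I would apply combinatorial shifting (better suited here than algebraic shifting, since individual shifts will need to be reversed in the last step) to replace $\mathcal{A}$ with a shifted intersecting $r$-family $\mathcal{A}^{*}$ of the same cardinality.

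Next I would show that $\mathcal{A}^{*}$ is a star at $v_{1}$. Suppose not, so that some $A \in \mathcal{A}^{*}$ omits $v_{1}$. Iterated application of the shifted property (replacing the largest vertex of $A$ with a smaller missing vertex) produces the face $\{v_{2}, \ldots, v_{r+1}\} \in \mathcal{A}^{*}$, and the intersecting property then forces every face of $\mathcal{A}^{*}$ to meet $\{v_{2}, \ldots, v_{r+1}\}$. A standard counting argument (cf.\ Frankl's proof of Hilton-Milner), partitioning $\mathcal{A}^{*}$ according to whether $v_{1}$ is present and using shiftedness to control the part omitting $v_{1}$, yields
\[
|\mathcal{A}^{*}| \;\leq\; \binom{n-1}{r-1} - \binom{n-r-1}{r-1} + 1,
\]
which for $2 \leq r < n/2$ satisfies $\binom{n-r-1}{r-1} \geq r \geq 2$ and hence $|\mathcal{A}^{*}| < \binom{n-1}{r-1}$, contradicting the cardinality assumption. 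Thus $\mathcal{A}^{*}$ is a star at $v_{1}$.

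The main obstacle is the third step: lifting the star property from $\mathcal{A}^{*}$ back to $\mathcal{A}$, since a single combinatorial shift $S_{ij}$ can \emph{a priori} turn a non-star into a star. I would proceed by reverse induction on the finite sequence of shifts carrying $\mathcal{A}$ to $\mathcal{A}^{*}$: it suffices to show that whenever $S_{ij}(\mathcal{B})$ is a star at $v_{1}$ with $\mathcal{B}$ intersecting of cardinality $\binom{n-1}{r-1}$, $\mathcal{B}$ itself is a star at $v_{1}$. The case $j \neq 1$ is immediate, since $v_{1}$ is untouched by the shift. The remaining case $j = 1$ is the substantive one: a hypothetical face $A \in \mathcal{B}$ omitting $v_{1}$ must satisfy $v_{i} \in A$ and $(A \setminus \{v_{i}\}) \cup \{v_{1}\} \notin \mathcal{B}$, and a careful analysis (due to Frankl) of the intersections of such an $A$ with the $v_{1}$-containing faces of $\mathcal{B}$, combined with the maximum-cardinality hypothesis, rules out its existence.
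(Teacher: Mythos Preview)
The paper does not prove this statement: it is cited (with attribution to Hilton and Milner) in Section~\ref{sec:Further-questions} as motivation for Conjecture~\ref{con:strictEKR}, and no argument is given. There is therefore no paper proof to compare your proposal against.

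As a standalone sketch of the classical result, your outline is essentially the standard combinatorial-shifting proof (as in Frankl's survey). Two comments. First, in step~2 the ``standard counting argument'' you invoke \emph{is} the Hilton--Milner theorem in its quantitative form; since that is precisely what is being established, this step needs to be carried out rather than cited, and it carries most of the weight of the proof. Second, your step~3 contains a small but genuine error: it is not true that $\mathcal{B}$ must be a star at $v_{1}$ in the case $j=1$. Indeed, if $\mathcal{B}$ is the full star at $v_{i}$ (all $r$-sets containing $v_{i}$), then $S_{i1}(\mathcal{B})$ is the full star at $v_{1}$, yet $\mathcal{B}$ is a star at $v_{i}$, not at $v_{1}$. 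The correct inductive statement is that $\mathcal{B}$ is a star at \emph{some} vertex; the Frankl-style analysis you allude to shows that in the case $j=1$, $\mathcal{B}$ is a star either at $v_{1}$ or at $v_{i}$. With that correction the argument goes through.
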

Holroyd and Talbot, and later Borg, actually conjectured slightly
more than we stated in Conjectures \ref{con:HolroydTalbot} and \ref{con:HolyroydTalbotForComplexes}:
\begin{conjecture}
\emph{\label{con:strictEKR} (Holroyd-Talbot \cite[Conjecture 7]{Holroyd/Talbot:2005};
Borg \cite[Conjecture 1.7]{Borg:2009})} If $\Delta$ is a simplicial
complex having minimal facet cardinality $k$, then $\Delta$ is $r$-EKR
for $r\leq\frac{k}{2}$, and strictly $r$-EKR for $r<\frac{k}{2}$.
\end{conjecture}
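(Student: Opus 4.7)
The plan is to follow the two-step strategy of Theorem \ref{thm:HolyroydTalbotDepthK}: first establish a strict version of Lemma \ref{lem:HolyroydTalbotShifted} for shifted complexes, then lift through algebraic shifting to recover the conclusion for more general complexes. For the shifted case, I would re-run the induction used for Lemma \ref{lem:HolyroydTalbotShifted}, replacing the Erd\H{o}s-Ko-Rado base case by the Hilton-Milner theorem to get the strict conclusion on the simplex. The decomposition $\shift \mathcal{A} = \mathcal{C} \disjointunion \mathcal{D}$ (faces containing $v_n$, faces avoiding $v_n$) carries over unchanged, but now I would track equality in both pieces: a maximum intersecting family must saturate both $\vert \mathcal{C}\vert \leq f_{r-2}(\link_\Delta\{v_1,v_n\})$ and $\vert \mathcal{D}\vert \leq f_{r-1}(\link_{\Delta \setminus v_n} v_1)$, so by induction $\mathcal{C}_0$ is the star of $v_1$ in $\link_\Delta v_n$ and $\mathcal{D}$ is the star of $v_1$ in $\Delta \setminus v_n$. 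Combining these should force $\shift \mathcal{A}$ to be the star of $v_1$ in $\Delta$.

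The second step is the lifting. For a near-cone $\Delta$ with apex $v$, one would like to conclude that if $\shift \mathcal{A}$ is the star of $v_1$, then $\mathcal{A}$ is the star of $v$. This is the delicate part, because algebraic shifting is not a transparent combinatorial operation and in general does not preserve the property of being a star. A plausible workaround is to use combinatorial shifting in place of algebraic shifting for the lifting: combinatorial shifts toward $v$ preserve intersection, preserve cardinality, and cannot destroy a star centered at $v$. If a sequence of combinatorial shifts can be chosen to produce a shifted complex to which the strict shifted lemma applies, then reversing the shifts preserves the star structure and yields the result. Alternatively, one could try to adapt Nevo's analysis (Lemma \ref{lem:ShiftedNearCone}) to track how the star through $v$ in $\Delta$ corresponds to the star through $v_1$ in $\shift \Delta$ at the level of individual faces.

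The main obstacle will be precisely this lifting step. Algebraic shifting gives us the quantitative information (minimum facet cardinality of $\shift_F \Delta$ equals $\depth_F \Delta + 1$) needed for cardinality bounds, but uniqueness of the extremal family is a qualitative property that shifting can destroy. Proving Conjecture \ref{con:strictEKR} in full generality faces the further obstacle, already present for the non-strict version, that the near-cone hypothesis is genuinely used: outside of near-cones, Lemma \ref{cor:NearConeApexMaxlLink} and Corollary \ref{cor:NearConePreserveFApex} both fail, and one loses the canonical identification of the extremal apex. A realistic intermediate goal would therefore be to prove the strict version of Corollary \ref{cor:SeqCMisrEKR}: if $\Delta$ is a sequentially Cohen-Macaulay near-cone with minimum facet cardinality $k$, then $\Delta$ is strictly $r$-EKR for $r < k/2$.
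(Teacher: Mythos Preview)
This statement is a \emph{conjecture}, not a theorem: the paper does not prove it. It appears in Section~\ref{sec:Further-questions} as an open problem, followed immediately by the question ``Can an algebraic shifting (or some other) argument be adapted to prove Conjecture~\ref{con:strictEKR}, optionally restricted to the case of a sequentially Cohen-Macaulay complex?'' There is therefore no paper proof to compare your proposal against, and what you have written is --- as you yourself acknowledge --- a strategy outline with an identified gap rather than a proof.

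The strategy you sketch is precisely the direction the paper is gesturing toward, and the obstacle you name is the reason it remains open. Two remarks sharpen where the difficulty lies. First, the lifting problem bites earlier than your step~2: already in your step~1 the induction of Lemma~\ref{lem:HolyroydTalbotShifted} begins by replacing $\mathcal{A}$ with $\shift\mathcal{A}$, so even when $\Delta$ is shifted you would only conclude that $\shift\mathcal{A}$ is the star at $v_1$, not that $\mathcal{A}$ itself is. Second, your proposed fix via combinatorial shifting does not obviously mesh with the rest of the machinery, since Corollary~\ref{lem:ShiftOfCMSkel} --- the result converting $\depth_F\Delta$ into the minimum facet cardinality of the shifted complex --- is specific to algebraic shifting and has no direct analogue for an arbitrary sequence of combinatorial shifts. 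There is also a smaller wrinkle in your equality-tracking: the strict inductive hypothesis only says the extremal family is the star at \emph{some} vertex, so forcing both $\mathcal{C}_0$ and $\mathcal{D}$ to be centered at $v_1$ requires an additional uniqueness statement for the vertex realizing $\max_w f_{r-1}(\link_\Delta w)$ in a shifted complex. Your proposed intermediate target, a strict version of Corollary~\ref{cor:SeqCMisrEKR}, is exactly the restricted case the paper singles out as the natural first goal; the paper offers no proof of it.
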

Can an algebraic shifting (or some other) argument be adapted to prove
Conjecture \ref{con:strictEKR}, optionally restricted to the case
of a sequentially Cohen-Macaulay complex?

\bigskip{}

Of course, even the more restricted Conjecture \ref{con:HolyroydTalbotForComplexes}
remains open for general complexes. Theorem \ref{thm:HolyroydTalbotDepthK}
suggests that a counterexample to Conjecture \ref{con:HolyroydTalbotForComplexes},
if one exists, should be a complex which badly fails to be Cohen-Macaulay.
We discuss briefly some examples of such complexes:
\begin{example}
The facets of the boundary $\Delta_{0}$ of a simplex with $n+1$
vertices is intersecting, hence not $n$-EKR. One can increase the
dimension by coning $k$ points over each facet to obtain a pure complex
$\Delta$ with $(k+1)\cdot(n+1)$ points. Since $\tilde{H}_{n}(\Delta)\neq0$,
the complex is not Cohen-Macaulay, and the hope for a counterexample
would be: for some $k>n$ and $n\leq r\leq\lfloor\frac{k+n}{2}\rfloor$,
that the family $\mathcal{A}$ consisting of all $r$-faces that contain
a facet of $\Delta_{0}$ would be larger than $f_{r-1}(\link_{\Delta}v)$. 

But we count: if $v\in\Delta_{0}$, then $f_{r-1}(\link_{\Delta}v)=n\cdot{n+k \choose r-1}$,
while $\vert\mathcal{A}\vert=(n+1)\cdot{k \choose r-n}$. A straightforward
computation (cancel, then match terms) yields that $f_{r-1}(\link_{\Delta}v)/\vert\mathcal{A}\vert>1$.
Hence $\vert\mathcal{A}\vert<f_{r-1}(\link_{\Delta}v)$, and thus
$\mathcal{A}$ and $\Delta$ are not a counterexample to Conjecture
\ref{con:HolyroydTalbotForComplexes}. 
\end{example}

\begin{example}
Cyclic graphs $C_{n}$ are not sequentially Cohen-Macaulay for $n\neq3,5$
\cite[Proposition 4.1]{Francisco/VanTuyl:2007}. For example, $I(C_{4})$
consists of two disjoint 2-faces, while $I(C_{7})$ is a triangulation
of the Möbius strip. Nonetheless, Talbot \cite{Talbot:2003} showed
that the independence complex of every cyclic graph is $r$-EKR for
all $r$. More recently, the independence complex of the disjoint
union of two cycles \cite{Hilton/Holroyd/Spencer:2010UNP}, and of
the disjoint union of an arbitrary number of cycles and a path \cite{Hilton/Spencer:2007}
were shown to be $r$-EKR for all $r$. Conjecture \ref{con:HolroydTalbot}
also holds \cite{Borg/Holroyd:2009} for the disjoint union of an
isolated vertex and a somewhat wider class of non-sequentially Cohen-Macaulay
graphs, including cycles and complete multipartite graphs. 
\end{example}
\bibliographystyle{hamsplain}
\bibliography{5_Users_paranoia_Documents_Research_Master}

\end{document}